\numberwithin{equation}{section}
\DeclareFontFamily{OT1}{rsfs}{}
\DeclareFontShape{OT1}{rsfs}{n}{it}{<-> rsfs10}{}
\DeclareMathAlphabet{\mathscr}{OT1}{rsfs}{n}{it}
\theoremstyle{plain}
\newtheorem{theorem}{Theorem}[section]
\newtheorem{lemma}{Lemma}[section]
\theoremstyle{definition}
\newtheorem{definition}[theorem]{Definition}
\newtheorem{remark}{Remark}[section]
\def\be{\begin{equation}}
\def\ee{\end{equation}}
\newcommand{\bu}{\mathbf{u}}
\newcommand{\bx}{\mathbf{x}}
\def\bes{\begin{equation*}}
\def\ees{\end{equation*}}
\def\bea{\begin{equation} \begin{aligned}}
\def\eea{\end{aligned} \end{equation}}
\def\beas{\begin{equation*} \begin{aligned}}
\def\eeas{\end{aligned} \end{equation*}}
\def\f{\frac}
\def\p{\partial}
\def\bi{\begin{itemize}}
\def\ei{\end{itemize}}
\newcommand{\ignore}[1]{}
\newcommand\R{\mathbb{R}}
\newcommand{\pa}{\partial}
\newcommand{\na}{\nabla}
\newcommand{\va}{\varepsilon}
\newcommand{\lm}{\lambda}
\newcommand\mcL{\mathcal{L}}
\newcommand{\stkout}[1]{\ifmmode\text{\sout{\ensuremath{#1}}}\else\sout{#1}\fi}
 \let\oldequation\equation
 \let\oldendequation\endequation
 \renewenvironment{equation}
 {\linenomathNonumbers\oldequation}
 {\oldendequation\endlinenomath}
 \let\oldalign\align
 \let\oldendalign\endalign
 \renewenvironment{align}
 {\linenomathNonumbers\oldalign}
 {\oldendalign\endlinenomath}
 \let\oldgather \gather 
 \let\oldendgather\endgather
\title[Darcy-Boussinesq System in Layered Porous Media]
\author[Yining Cao, Weisheng Niu and Xiaoming Wang]{}
\subjclass{35Q35,  35Q86, 76D03}
\keywords{Darcy-Boussinesq System,  Well-posedness,  Layered Porous Media, Convection, Regularity.}
\thanks{This work is supported by NNSF of China (11971031)  (Niu) and NNSF of Anhi Province (2108085Y01)(Niu),  NSFC 12271237 (Wang) and the Havener Endowment (Wang).}
\thanks{$^*$Corresponding author: Xiaoming Wang}
\begin{document}

%\linenumbers  
 \maketitle
\centerline{\scshape
Yining Cao $^{{\href{mailto:yinicao.math1@gmail.com}{\textrm{\Letter}}}1}$,
  Weisheng Niu $^{{\href{mailto:niuwsh@ahu.edu.cn}{\textrm{\Letter}}}2}$,
 and Xiaoming Wang $^{{\href{mailto:wxm.math@outlook.com}{\textrm{\Letter}}}3, 1}$}
\medskip

{\footnotesize
 \centerline{$^1$Department of Mathematics, Southern University of Science and Technology, China}}

%\medskip

{\footnotesize
 \centerline{$^2$ School of Mathematical Science, Anhui University,
 CHINA}
  \centerline{$^3$ Department of Mathematics and Statistics, Missouri University of Science and Technology,  USA
 }
}

\begin{abstract}
We investigate the Darcy-Boussinesq model for convection in layered porous media. In particular, we establish the well-posedness of the model in two and three spatial dimension, and derive the regularity of the solutions in a novel piecewise $H^2$ space.
\end{abstract}

%S1
\section{Introduction}
%{\color{red}To be inserted by Wang.}
Convection, i.e., the fluid motion due to differential heating,  is a fascinating topic. It also serves as a paradigm for a plethora of nonlinear phenomena. See for instance the classical treatise by Nobel Laureate Chandresekhar \cite{chandrasekhar1961oxford} as well as the book by Drazin and Reid \cite{drazin1981hydrodynamic}.
Convection in porous media, highly relevant to geophysical applications and many engineering problems, has been the focus of many researchers. The treatise by Nield and Bejan \cite{nield2017convection} is an excellent survey of convection in porous media from the physical/geophysical side. There are also several mathematical works in this area by Fabrie, Nicholaenko, Ly, Titi, Oliver, Doering, Constantin, Otero et al that cover rigorous bound on the Nusselt number, well-posedness of the system and their long time behavior etc \cite{doering1998jfm, fabrie1986aam, fabrie1996aa, ly1999jns, oliver2000gevrey, otero2004jfm}.  All these works, except a few section from the book of Nield and Bejan, deal with the case when the porous media is essentially homogeneous in the sense that the permeability and other parameters are either constants or are nice smooth functions of the spatial variable.

On the other hand, many natural and engineered porous media are of layered structure in the sense that the permeability and other physical parameters are piecewise constants. Such layered porous media, related to the technology of underground carbon dioxide (CO2) sequestration has received quite some attention from the fluid and environmental community \cite{bickle2007modelling, huppert2014ar, hewitt2014jfm, hewitt2020jfm, hewitt2022jfm, mckibbin1980jfm, mckibbin1981heat, mckibbin1983thermal, sahu2017tansp, salibindla2018jfm, wooding1997convection}. The review paper by Huppert and Neufeld \cite{huppert2014ar} provides an excellent survey.

In the current work, we investigate the well-posedness of convection in layered porous media.
Due to the layered nature of the porous media, the solution is no longer smooth, as oppose to earlier works on the homogeneous case \cite{fabrie1986aam, fabrie1996aa, ly1999jns, oliver2000gevrey}. We show that the solution belongs to a piecewise smooth function space with appropriate interfacial boundary conditions.
We believe that this is the first rigorous mathematical work on convection in layered porous media.

The rest of the paper is organized as follows. We provide the setup of the problem as well as some preliminaries  in section 2. The existence of global weak solution is presented in section 3. Regularity and uniqueness of solutions are presented in sections 4 and 5 for the two and three dimensional cases respectively.  Concluding remarks are offered in section 6.

%S2
\section{Formulation of the Problem}
%S 2.1
\subsection{Physical Model}
%In the two-dimensional case, we consider the idealized layered domain $\Omega=(0,L) \times(-H, 0)$ with constants $z_j, 0 \leq j \leq \ell $ satisfying
For $d=2,3$, we consider the idealized layered domain $\Omega=(0,L)^{d-1} \times(-H, 0)$ with constants $z_j, 0 \leq j \leq \ell $ satisfying
\begin{align}
-H \equiv  z_l<\cdots<z_{0} \equiv 0 . \notag
\end{align}
The `layers'' or "strips",  i.e., the $\Omega_j$s are defined as follows:
\begin{align}
\Omega_j=\left\{\textbf{x}=(x,z) \in \Omega \mid z_{j}<z<z_{j-1}\right\}, \quad 1 \leq j \leq   \ell .\notag
\end{align}
%The generalization to the three-dimensional case is straightforward.
For convection in this layered domain, the governing equations in $\Omega$ are the following Darcy-Boussinesq system (with the usual Boussinesq approximation)\cite{nield2017convection}
\begin{align}
\operatorname{div}(\mathbf{u})=0, \quad \mathbf{u}=\left(u_1,\cdots, u_d\right), \label{1.1}\\
\mathbf{u}=-\frac{K}{\mu}\left(\nabla P+\rho_0(1+\alpha \phi) g \mathbf{e}_z \right),\label{1.2} \\
b \frac{\partial \phi}{\partial t}+\mathbf{u} \cdot \nabla \phi-\operatorname{div}(b D \nabla \phi)=0.\label{1.3}
\end{align}
Here  $\mathbf{u}, \phi$ and $P$ are the unknown fluid velocity, concentration, and pressure, respectively; $\rho_0, \alpha,  \mu, g$ are the constant reference fluid density,   constant expansion coefficient,  constant dynamic viscosity, and the gravity acceleration constant, respectively; and $\mathbf{e}_z $ stands for the unit vector in the $z$ direction. In addition,
$K, b, D$ represent the permeability, porosity, and diffusivity coefficients respectively, which are assumed to be constant within each strip/layer  $\Omega_j$.
Namely,
\begin{align}
K=K(\bx)=K_j, b=b(\bx)=b_j, D=D(\bx)=D_j, \quad \bx \in \Omega_j, \quad 1 \leq j\leq \ell,   \notag
\end{align}
for a set of constants $\left\{K_j, b_j, D_j \right\}_{j=1}^{\ell}$.

System \eqref{1.1}-\eqref{1.3} modeled convection in a layered porous media where each layer is of different permeability/porosity/diffusivity. On the interfaces $z= z_j$,  we assume
\begin{eqnarray}\label{IBC_up}
    \mathbf{u}\cdot \mathbf{e}_z ,\,\, P \text{ are continuous at } z= z_j, \, 1 \leq j \leq  \ell-1 ,
    \\
  \phi, \,\,  bD \frac{\partial \phi}{\partial z}  \text{ are continuous at } z= z_j, \, 1 \leq j \leq  \ell-1.
  \label{IBC}
\end{eqnarray}
%for $0 \leq i \leq\ell$.
Interfacial boundary condition \eqref{IBC} implies that the solution $\phi$ cannot be smooth over the whole domain in general unless $bD$ is a constant.
This is one of the main challenges of this problem.

System \eqref{1.1}-\eqref{1.3} is supplemented with the initial condition
\be\label{IC}
\phi( x,z , 0)=\phi_0(x,z),
\ee
and the boundary conditions
\begin{equation}\label{BC}
\begin{split}
\mathbf{u}\cdot e_z \left(x, 0 ; t\right)=\mathbf{u}\cdot e_z \left(x,-H ; t\right)=0, \\
%&\quad \text { and } \quad \mathbf{u}\left(0, z ; t\right)=\mathbf{u}\left(1, z ; t\right)\\
\phi\mid_{z=0}=C_0, \quad \phi\mid_{z=-H}=C_1,
%, &\quad \text { and } \quad \phi\left(0, z ; t\right)=\phi\left(1, z ; t\right).
\end{split}
\end{equation}
together with periodicity in the horizontal direction(s) $ x$ ($x=(x_1,x_2)$ in the three-dimensional case).

\begin{remark}
    By a change of variable $\widetilde{\phi}(x,z, t)=\phi(x,z, t)+\frac{1}{H}\left(C_1 z-C_0\left(H+z\right)\right)$, the boundary condition \eqref{BC}$_2$ for $\phi$ can be homogenized, and the  extra terms involving $C_0, C_1$ appearing in the new set of equations similar to those \eqref{1.3} are lower order terms and easy to handle. Hereafter for simplicity,   and without loss of generality, we assume that $C_0=C_1=0$ . Thus,
\eqref{BC}$_2$ becomes
    \begin{equation}\label{hBC}
       \phi\mid_{z=0}=0,  \quad \phi\mid_{z=-H}=0.
        %, \quad \text { and } \quad \phi\left(0, z ; t\right)=\phi\left(1, z ; t\right).
    \end{equation}
\end{remark}

\begin{remark}
By setting $\widetilde{P}=P-\rho_0  g z$, and omitting $\sim$ for simplicity, we
may rewrite \eqref{1.2} as
\begin{align}
\mathbf{u}=-\frac{K}{\mu}\left(\nabla P+ \alpha \rho_0 g \phi \mathbf{e}_z \right). \label{1.2'}
\end{align}
Notice that the interfacial conditions \eqref{IBC_up} remain unchanged under the change of variable.
We will adopt this new formulation  hereafter.
\end{remark}

% S2.2 
\subsection{Weak formulation}
Let $L^p(\Omega)$ and $H^k(\Omega)$ denote the usual $L^p$-Lebesgue space of integrable functions and $H^k$ Sobolev spaces that are periodic in the horizontal direction(s), respectively, for $1 \leq p \leq \infty$ and $k \in \mathbb{R}$. The inner product in $L^2(\Omega)$ will be denoted by $(\cdot , \cdot)$. Let
 \begin{align}
\begin{gathered} \notag
\mathcal{V}:=\left\{\phi \in C(\bar{\Omega}) \text{ and }\phi\mid_{\Omega_j}\in C^{\infty}(  \bar{\Omega}_j  ),\, \, 1\leq  j \leq l  \mid \phi \text { satisfies }\eqref{IBC},   \eqref{hBC}\right\}, \\
V:=\text { Closure of } \mathcal{V} \text { in the } H^1-\text {norm},\\
H:=\text { Closure of } \mathcal{V} \text { in the } L^2-\text {norm},
\end{gathered}
 \end{align}
and let us denote the $L^2$-norm of $H$ by $\|\cdot\|_H$, and the norm of $V$ by $\|\cdot\|_V$. The inner product of $H$ is exactly the inner product of $L^2(\Omega)$. Notice that due to the boundary conditions \eqref{hBC}, the Poincar\'{e} inequality implies that the $V$-norm and the $H^1$-Sobolev norm are equivalent and thus,  when combined with the lower and upper bounds on $b$ and $D$, we can define
% V norm
 \begin{equation}\label{V-norm}
 \|\phi\|_V=\|\sqrt{bD}\nabla \phi\|_{L^2(\Omega)}.
 \end{equation}
We also recognize that $V=H^1_{0,per}(\Omega)$, the subspace of $H^1(\Omega)$ that vanishes at $z=0, -H$ and periodic in the horizontal direction.
We denote the dual space of $V$ by $V^* $ with norm $\|\cdot\|_{V^{*}}$. The symbol $\langle\cdot, \cdot\rangle$ will stand for the duality product between $V$ and $V^* $.

Let us also define
\begin{align}
\begin{gathered}\notag
\tilde{\mathcal{V}}:=\left\{\mathbf{u} \in C(\bar{\Omega})^d \text{ and }\mathbf{u}\mid_{{\Omega}_j}\in C^{\infty}(\bar{\Omega}_j)^d,\, \, 1 \leq j \leq  \ell  \mid \mathbf{u}\text { satisfies }\eqref{1.1},\eqref{IBC_up}, \eqref{BC}_1\right\} \\
%\mathbf{V}:=\text { Closure of } \tilde{\mathcal{V}} \text { in the } H^1-\text {norm } \\
\mathbf{H}:=\text { Closure of } \tilde{\mathcal{V}} \text { in the } L^2-\text {norm. }
\end{gathered}
\end{align}
%Let $\mathrm{P}_\sigma:\left(L^2(\Omega)\right)^2 \rightarrow \mathbf{H}$ be the Helmholtz-Leray orthogonal projection. By applying $\mathrm{P}_\sigma$ to \eqref{1.2} and using \eqref{1.1}, we have
%\begin{equation}\label{LerayP}
%\mathbf{u}+\f{\rho_0 g K}{\mu}\mathrm{P}_\sigma(\phi)=0.
%\end{equation}

By applying the divergence operator to \eqref{1.2'} within each subdomain $\Omega_j$, $1 \leq j \leq  \ell $, we obtain that  
 \begin{equation} \notag
 -\int_{\Omega_j}\frac{K}{\mu} (\nabla  P  -  \alpha \rho_0 g \phi \mathbf{e}_z  )\cdot \nabla q  d\bx  +  \int_{\pa\Omega_j}\frac{K}{\mu} (\nabla  P  -  \alpha \rho_0 g \phi \mathbf{e}_z  )\cdot n_j   q  d\sigma=0
 \end{equation} for any function $q\in H^1(\Omega)$ which is L-periodic in $x$, where $n_j$ denotes the unit outward normal to the boundary $\pa \Omega_j.$
 By summation over $j$,  and using the boundary conditions \eqref{IBC_up} and  \eqref{BC}$_1$, we get 
 \begin{align}\notag
 & \int_{\Omega}\frac{K}{\mu} (\nabla  P  -  \alpha \rho_0 g \phi \mathbf{e}_z  )\cdot \nabla q  d\bx  =0.
 \end{align}
This, together with   \eqref{BC}$_1$, \eqref{hBC} and the periodicity of $\mathbf{u}, \phi$ in the horizontal direction(s), implies the equations for $P$ in $\Omega$:
\begin{equation}\label{eqP}
\begin{cases}
-\text{div} \Big( \frac{K}{\mu} \nabla P\Big) =     \text{div} \Big(  \frac{ \alpha \rho_0 g  K}{\mu} \phi \mathbf{e}_z  \Big) \quad \text{ in } \Omega,   \\
 \frac{\partial P}{\partial z }(x,0) =  \frac{\partial P}{\partial z }(x,-H)=0      ,
 %\\
%P(0,z)=P(1,z) .
\end{cases}
\end{equation}
together with periodicity in the horizontal direction.
By the Lax-Milgram theorem, the above equation admits a unique, up to a constant, solution in $V$ for any $\phi\in H$.
This means that for any function $q\in H^1(\Omega)$ which is L-periodic in $x$, it holds that
\begin{align}
\int_\Omega \frac{K}{\mu} \nabla  P\cdot \nabla q d\bx =   \int_\Omega    \frac{ \alpha \rho_0 g  K}{\mu} \phi \mathbf{e}_z \cdot \nabla q  d\bx .
\end{align}
 Moreover, from the $W^{1,p}$ estimates for the elliptic systems with co-normal boundary conditions and variably partially small BMO coefficients \cite{dongli21jfa}, we have
\begin{equation}\label{p-phi}
    \|P \|_{W^{1,p}(\Omega)} \leq C \| \phi \|_{L^p(\Omega)}, \,\, \text{ for any } 1 < p <\infty.
\end{equation}
In view of  \eqref{1.2}, we have
\begin{equation}\label{u-phi-lp}
     \|\mathbf{u} \|_{L^p(\Omega)} \leq C (\|P \|_{W^{1,p}(\Omega)} + \| \phi \|_{L^p(\Omega)} ) \leq   \| \phi \|_{L^p(\Omega)}, \,\, 1 < p <\infty.
\end{equation}
Thanks to \eqref{1.2} and  \eqref{eqP}, the velocity $\mathbf{u}$ is a function of the concentration $\phi$.  As a result, \eqref{1.3} can be viewed as an equation of $\phi$ only.

Multiplying \eqref{1.3} by a test function $\psi \in V$, and integrating over $\Omega$,  we get
\ignore{
\begin{equation}
    \langle b\p_t \phi,\psi\rangle_{\Omega_j} +   \left( bD\nabla \phi, \nabla \psi\right)_{\Omega_j}+\int_{\Omega_j}\left(\mathbf{u}\cdot \nabla \phi \right)\psi d\bx- \int_{\p \Omega_j}bD\nabla \phi \cdot \mathbf{n}_j \psi d\sigma=0,
\end{equation}
where $\mathbf{n}_j$ is the outward unite normal to $\partial \Omega_j$. Denote the upper and lower boundary of $\Omega_j$ as $\Gamma_j^{+}$ and $\Gamma_j^{-}$, respectively. Since the function $\mathbf{u}, \phi$ are periodic with respect to the $x$ variable,
$$
\int_{\partial \Omega_j} b D \nabla \phi \cdot \mathbf{n}_j \psi d \sigma=\int_{\Gamma_j^{+}} b D \nabla \phi \cdot \mathbf{n}_j \psi d \sigma+\int_{\Gamma_j^{-}} b D \nabla \phi \cdot \mathbf{n}_j \psi d \sigma .
$$
By the continuity of vertical flux across the interface
$$
\int_{\Gamma_{j+1}^{+}} b D \nabla \phi \cdot \mathbf{n}_{j+1} \psi d \sigma=-\int_{\Gamma_j^{-}} b D \nabla \phi \cdot \mathbf{n}_j \psi d \sigma.
$$
Summing over $j$, $j=  1,\cdots, \ell $,  we deduce that
}
\begin{equation}\label{wf}
    \langle b\p_t \phi,\psi\rangle +   \left( bD\nabla \phi, \nabla \psi\right)+\int_{\Omega}\left(\mathbf{u}\cdot \nabla \phi \right)\psi d\bx=0, \forall \psi \in V,
\end{equation}
where we have performed integration by parts and utilized the boundary and interfacial conditions.

We then define the bilinear forms $A(\phi,\psi )$  and $ B(\mathbf{u}, \phi,\psi )$  for some $\mathbf{u}\in \mathbf{H}\cap L^p(\Omega)^d$, with a $p>2$:
\begin{align} \notag
A(\phi,\psi )&=  \left( bD\nabla \phi, \nabla \psi\right), \forall \phi, \psi \in V , \\
B(\mathbf{u}, \phi,\psi )&= \int_{\Omega}\left(\mathbf{u}\cdot \nabla \phi \right)\psi d\bx, \forall \phi, \psi \in V. \notag
\end{align}
The weak solution to the system \eqref{1.1}-\eqref{BC} is defined as follows.
\begin{definition}\label{def1}
Let $\phi_0 \in L^2(\Omega)$  be given, and let $T>0$. A weak solution of \eqref{1.1}-\eqref{1.3}, subject to the boundary conditions \eqref{IBC_up}, \eqref{IBC}, \eqref{BC}$_1$, \eqref{hBC} together with the periodic conditions in the horizontal direction, and the initial condition \eqref{IC} on the interval $[0,T]$ is a triple $(\mathbf{u},\phi, P)$, satisfying
\begin{align}
\phi \in L^2(0, T ; V) \cap  L^{\infty}\left(0, T ; L^2(\Omega)\right) \text{ and }{\partial \phi}/{\partial t} \in L^2\left(0, T ; {V^* }\right),   \notag
\end{align}
and
\begin{align}\label{def-eq}
\left(b \phi(t_2), \psi\right)-\left(b \phi(t_1), \psi\right)+\int_{t_1}^{t_2}A( \phi, \psi) dt+\int_{t_1}^{t_2}B(\mathbf{u},  \phi,\psi) d t=0,\end{align}
$\forall  \psi \in V, t_1, t_2 \in[0,T]$,  and  $\phi(0)=\phi_0 $ in $L^2(\Omega)$,
where $\mathbf{u} \in L^2\left(0, T ; L^p(\Omega)\right)$ and $P \in  L^2\left(0, T ; W^{1,p}(\Omega)\right) $, with $2 \le p <\infty$ if $d=2$, and $p=6$ if $d=3$,  are given by Darcy's law, i.e., \eqref{1.2} and \eqref{eqP}, respectively.

\end{definition}

% S2.3
\subsection{Natural space}

Since the physical set-up of the problem is different from the classical homogeneous media setting, the natural space for the solution, which is associated with the behavior of the principal differential operator of the system,  is different from the classical setting. Therefore, we need to investigate the behavior of the principal differential operator of the system, i.e., 
  $\mcL= -\text{div}(bD \na)$, subject to the boundary conditions \eqref{IBC}, \eqref{BC}$_1$ and \eqref{hBC}. 
  
 %We equip $V$ with the equivalent norm of
 % V-norm
 %\begin{equation}\label{V-norm}
 % \|\phi\|_V^2 = \int_\Omega bD|\nabla\phi|^2.
 % \end{equation}
 Define 
 % solution space
  \begin{equation}\label{W}
  W= \{\varphi\in V:  \pa_{x} \varphi \in H^1(\Omega), ~ bD \pa_{z} \varphi  \in H^1(\Omega) \},
 \end{equation}
endowed  with norm
% W norm
\begin{equation}\label{W-norm}
 \|\varphi\|^2_{W}= \|\varphi \|^2_V+ \|\pa_{x}\varphi \|^2_{H^1(\Omega)} + \|bD\pa_{z}\varphi \|^2_{H^1(\Omega)}.
 \end{equation}
 Notice that $W$ is not twice weakly differentiable. In fact, the functions in $W$ are piecewise twice differentiable, but the vertical ($z$) derivative is discontinuous at each interface between two neighboring layers in general. This space is natural since it is the natural space of the eigenfunctions of the principal linear operator $\mathcal{L}$ as we shall demonstrate below.
 The discontinuity of the derivative implies that the classical method can not be applied directly. 
 
 We first show that $W$ is associated with the eigenfunctions of $\mathcal{L}$.
 % lemma 2.1
 \begin{lemma}{eigenfunctions}
 The operator  $\mcL= -\text{div}(bD \na)$ is self-adjoint, and it possesses a set of eigenfunctions $\{w_k\}_{k=1}^\infty\subset W$ which forms an orthonormal basis in $H$, and an orthogonal basis in $V$.
 In addition, the eigenfunctions are smooth in each layer $\Omega_j$
 \end{lemma}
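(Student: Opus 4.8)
The plan is to realise $\mcL$ as the self-adjoint operator on $H$ attached to the form $A$, to apply the spectral theorem for operators with compact resolvent, and then to upgrade the variational regularity of the eigenfunctions using the layered structure. First, $A(\varphi,\psi)=(bD\na\varphi,\na\psi)$ is bounded and symmetric on $V\times V$, and by \eqref{V-norm} it satisfies $A(\varphi,\varphi)=\|\varphi\|_V^2$, so it is coercive. By the standard correspondence between closed, semibounded symmetric forms and self-adjoint operators, $A$ determines a unique positive self-adjoint operator $\mcL$ on $H$ with $(\mcL\varphi,\psi)=A(\varphi,\psi)$ for $\psi\in V$, where $D(\mcL)$ consists of those $\varphi\in V$ for which $\psi\mapsto A(\varphi,\psi)$ is continuous in the $H$-topology; this is exactly $-\mathrm{div}(bD\na)$ read together with \eqref{IBC}, \eqref{BC}$_1$, \eqref{hBC} and the horizontal periodicity. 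In particular $\mcL$ is self-adjoint.

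Since $V\hookrightarrow H$ compactly by the Rellich theorem, the resolvent $\mcL^{-1}\colon H\to H$, well defined by Lax--Milgram, is compact, self-adjoint and positive. The spectral theorem then produces an orthonormal basis $\{w_k\}_{k\ge1}$ of $H$ consisting of eigenfunctions, $\mcL w_k=\lambda_k w_k$, with $0<\lambda_1\le\lambda_2\le\cdots\to\infty$ and $w_k\in D(\mcL)\subset V$. From $A(w_j,w_k)=\lambda_k(w_j,w_k)=\lambda_k\delta_{jk}$ we read off that $\{w_k\}$ is orthogonal in $V$ with $\|w_k\|_V^2=\lambda_k$; and if $v\in V$ is $A$-orthogonal to every $w_k$, then $(v,w_k)=\lambda_k^{-1}A(v,w_k)=0$ for all $k$, hence $v=0$. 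Thus $\{w_k\}$ is an orthogonal basis of $V$ as well.

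It remains to show $w_k\in W$ and that $w_k$ is smooth on each layer. Each $w_k$ is a weak solution of $-\mathrm{div}(bD\na w_k)=\lambda_k w_k$ with \eqref{hBC}, horizontal periodicity, and the transmission conditions \eqref{IBC}, the flux continuity of $bD\pa_z w_k$ being built into the weak formulation. On a layer $\Omega_j$ the coefficient equals the constant $b_jD_j$, so $-b_jD_j\Delta w_k=\lambda_k w_k$; interior and boundary elliptic estimates, a bootstrap in which $w_k\in H^m(\Omega_j)$ forces $\lambda_k w_k\in H^m(\Omega_j)$ hence $w_k\in H^{m+2}(\Omega_j)$, together with periodicity on the lateral faces and \eqref{hBC} on the layers meeting $z=0,-H$, give smoothness of $w_k$ up to the lateral and top/bottom portions of $\pa\Omega_j$. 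Near an interface $\{z=z_j\}$ the equation and the geometry are invariant under horizontal translations, so horizontal difference quotients are admissible test functions and yield $\pa_x w_k\in H^1$ locally there; rewriting the equation as $\pa_z(bD\pa_z w_k)=-\lambda_k w_k-bD\Delta_x w_k$ shows the right-hand side is in $L^2$ locally, so $bD\pa_z w_k$ has an $L^2$ vertical derivative, while its horizontal derivatives equal $bD\pa_z\pa_x w_k\in L^2$ since $bD$ is independent of $x$; hence $bD\pa_z w_k\in H^1$ near $\{z=z_j\}$. Combined with the interior regularity this gives $w_k\in W$, and a further bootstrap across each interface upgrades $w_k|_{\Omega_j}$ to $C^\infty(\bar\Omega_j)$. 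Alternatively, since $bD=bD(z)$, one may expand $w_k$ in a horizontal Fourier series; each mode then solves a regular one-dimensional transmission problem of Sturm--Liouville type on $(-H,0)$ whose eigenfunctions are, on each layer, explicit linear combinations of trigonometric or hyperbolic functions --- plainly piecewise analytic, continuous, with $bD\pa_z$ continuous --- making all three assertions transparent.

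The only genuine difficulty is this last point at the interfaces: precisely because $\pa_z w_k$ jumps across $\{z=z_j\}$ --- the phenomenon flagged right after \eqref{IBC} --- the usual global $H^2$ estimate is unavailable, and one has to exploit the flatness of the interfaces and the $x$-independence of the coefficients, either through tangential difference quotients or through separation of variables, in order to recover the piecewise-smooth regularity encoded by the space $W$.
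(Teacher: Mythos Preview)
Your proof is correct and follows essentially the same route as the paper: the form $A$ together with Lax--Milgram and Rellich compactness yields a compact positive self-adjoint inverse, whence the eigenbasis; horizontal translation invariance gives $\pa_x w_k\in H^1(\Omega)$, and the equation $\pa_z(bD\pa_z w_k)=-\lambda_k w_k-bD\pa_x^2 w_k$ then yields $bD\pa_z w_k\in H^1(\Omega)$, i.e.\ $w_k\in W$, with piecewise smoothness by a bootstrap on each layer. The differences are cosmetic: you justify horizontal regularity via difference quotients where the paper differentiates the eigenvalue equation directly and tests with $\pa_x w_k$ (your version is the rigorous implementation of that formal step), you supply the one-line check that $\{w_k\}$ is complete in $V$ which the paper merely asserts, and you add an alternative separation-of-variables argument not present in the paper.
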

 \begin{proof}
   Note that $\mcL$ is a self-adjoint positive operator, as
 \begin{align}
 \langle\mcL \varphi, \psi\rangle =\int_\Omega dD \na \varphi \na \psi\, d\bx.
 \end{align}
 Thanks to Lax-Milgram theorem, for any $f\in H$ there exists a unique solution $\varphi\in  V$ such that
 $$ \int_\Omega dD \na \varphi \cdot\na \psi d\bx =\int_\Omega f \psi d\bx  \quad \text{ for any }   \psi \in V.$$
 So  $\mcL^{-1}: H\rightarrow H$ is a compact self-adjoint positive operator. As a result, $ \mcL $ admits a sequence of eigenvalues $\{\lm_k\}_{k=1}^\infty$, where $\lm_k\rightarrow \infty$  as $k \rightarrow \infty.$ Moreover,  there exists an orthonormal basis $\{w_k\}_{k=1}^\infty$ of $H$, such that $w_k \in V$ is an eigenfunction corresponding to $\lm_k$:
  \begin{align}\label{eigenequ}
 \mcL  w_k = \lm_k w_k,  \quad   k=1,2,\cdots.
 \end{align}
 
  This implies that  $\{w_k\}_{k=1}^\infty$ also form an orthogonal basis for $V$, a factor that we will utilize in the sequel.

  The eigenfunctions enjoy additional regularity. We first show that regularity in the horizontal direction.
 By taking the derivative of \eqref{eigenequ} with respect to $x$, and then multiplying the resulting equation by $\pa_{x} w_k$ and integrating by parts,  we deduce that
 \begin{align}
 \int_{\Omega_j} b D|\nabla \pa_{x} w_k|^2 d\bx
 = \int_{\partial \Omega_j} b D \nabla \pa_{x}w_k \cdot n_j \pa_{x}w_k  d \sigma+ \int_{\Omega_j} \lm_k  |\pa_{x}w_k|^2  d\bx . \notag
 \end{align}
 By summation and using the boundary conditions  \eqref{IBC}  and \eqref{hBC},  we get
    \begin{align}
 \int_{\Omega} b D |\nabla \pa_{x} w_k|^2 d\bx
\leq    \int_{\Omega} \lm_k  |\na w_k|^2  d\bx .
 \end{align}
Since $ \pa_{x} w_k $ satisfies the boundary and interfacial conditions \eqref{IBC}  and \eqref{hBC}, we have $\pa_{x} w_k \in V\subseteq H^1(\Omega).$  On the other hand,  thanks to \eqref{eigenequ}
\begin{align}
\pa_{z}(bD \pa_{z} w_k)= -bD \pa_{x}(\pa_{x} w_k) -\lm_k w_k, \notag
 \end{align}
 which, together with the fact that  $\pa_{x} w_k  \in H^1(\Omega),$  implies that $bD \pa_{z} w_k \in H^1(\Omega).$  Therefore the eigenfunctions  actually belong to the natural space $W$ \eqref{W}.

  Moreover, since $b, D$ are constant in each $\Omega_j$, the eigenfunctions are piece-wise smooth. Indeed, assume that $w_k, \partial_{x}w_k\in H^m(\Omega_j)$ for some $m\geq 1$. By \eqref{eigenequ}, we have
  \begin{align}
 b_jD_j \frac{\pa^{m+1}w_k}{\pa z^{m+1}} =- b_jD_j \pa_{x}\frac{\pa^{m+1}w_k}{\pa z^{m-1}\pa x^2 } -\lm_k \frac{\pa^{m-1}w_k}{\pa z^{m-1}}, \notag
 \end{align}
which implies that $w_k\in H^{m+1}(\Omega_j)$. By induction, we know that $w_k \in C^\infty(\Omega_j)$.
\end{proof}

The next lemma states that the space $W$ has an equivalent norm. % works almost like $H^2$ 
%%%%% lemma 2.2
\begin{lemma}\label{remark-norm-equi}
There is an equivalent norm on $W$ given by $\|\mathcal{L}\phi\|_{L^2(\Omega)}$, i.e., there exists a $C>0$ such that
\begin{align}\label{remark-norm-equi-00}
\|\varphi\|_{W} \leq C \|\mathcal{L} \phi\|_{L^2(\Omega)}, \quad \forall\phi\in W.\end{align} 
Moreover, the norm $\|\phi\|_{W} $ is equivalent to  $\|\mathcal{L} \phi\|_{L^2(\Omega)}.$

\end{lemma}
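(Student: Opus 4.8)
The plan is to establish the two inequalities that together give the norm equivalence. One direction, namely $\|\mathcal{L}\phi\|_{L^2(\Omega)} \le C\|\phi\|_W$, is essentially immediate: writing $\mathcal{L}\phi = -bD\,\partial_x(\partial_x\phi) - \partial_z(bD\,\partial_z\phi)$ on each layer, both terms are controlled by the quantities $\|\partial_x\phi\|_{H^1(\Omega)}$ and $\|bD\,\partial_z\phi\|_{H^1(\Omega)}$ appearing in the definition \eqref{W-norm} of the $W$-norm (using that $bD$ is bounded above). So the real content is the reverse bound \eqref{remark-norm-equi-00}, which is an elliptic regularity estimate for the operator $\mathcal{L}$ with the interfacial transmission conditions \eqref{IBC} and the boundary/periodicity conditions \eqref{BC}$_1$, \eqref{hBC}.

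For that direction I would argue as follows. Given $\phi \in W$, set $f = \mathcal{L}\phi \in L^2(\Omega)$; then $\phi \in V$ solves $A(\phi,\psi) = (f,\psi)$ for all $\psi \in V$. First, testing with $\psi = \phi$ and using the Poincaré inequality gives $\|\phi\|_V \le C\|f\|_{L^2}$. Next I would recover horizontal regularity exactly as in the proof of Lemma \ref{eigenfunctions}: differentiating the equation in $x$ (legitimate since the coefficients $b,D$ are independent of $x$ and the horizontal direction is periodic), testing with $\partial_x\phi$, summing over the layers $\Omega_j$ and using that the interfacial terms cancel by \eqref{IBC} while the top/bottom terms vanish by \eqref{hBC}, one obtains $\|bD\,\nabla\partial_x\phi\|_{L^2}^2 \le \int_\Omega |f|\,|\partial_x^2\phi|$ — more carefully, after an integration by parts in $x$, $\|\sqrt{bD}\,\nabla\partial_x\phi\|_{L^2}^2 = -(\partial_x f,\partial_x\phi)$ is not available since $f$ need not be differentiable, so instead I test the $x$-differentiated equation directly: $A(\partial_x\phi,\psi) = (f,\partial_x\psi)$ is wrong sign, so one takes $\psi=\partial_x\phi$ in $A(\partial_x\phi,\partial_x\phi)=-(f,\partial_x^2\phi)=( \partial_x f,\partial_x \phi)$; the clean route is the difference-quotient version, bounding $\|\sqrt{bD}\,\nabla \partial_x^h \phi\|_{L^2}$ by $\|f\|_{L^2}\|\nabla \partial_x^h\phi\|_{L^2}^{?}$ — I would phrase it as: $A(\partial_x^h\phi,\partial_x^h\phi)=(f,\partial_x^{-h}\partial_x^h\phi)\le \|f\|_{L^2}\|\partial_x^h\nabla\phi\|_{L^2}$, whence $\|\partial_x\phi\|_{H^1} \le C\|f\|_{L^2}$ after absorbing and using Poincaré. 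Then, as in Lemma \ref{eigenfunctions}, the equation itself gives $\partial_z(bD\,\partial_z\phi) = -bD\,\partial_x^2\phi - f \in L^2(\Omega)$ layerwise, and since $bD\,\partial_z\phi$ is already in $L^2(\Omega)$ and continuous across interfaces by \eqref{IBC}, we get $bD\,\partial_z\phi \in H^1(\Omega)$ with $\|bD\,\partial_z\phi\|_{H^1} \le C\|f\|_{L^2}$. Adding the three contributions yields \eqref{remark-norm-equi-00}.

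The main obstacle is the horizontal regularity step: one must make the $x$-differentiation rigorous for a merely-$W$ function (not a smooth eigenfunction), which is why I would run it through difference quotients in the periodic horizontal variable rather than literally differentiating, and one must be careful that all interfacial integrals $\int_{\Gamma_j} bD\,\nabla\partial_x^h\phi\cdot n_j\,\partial_x^h\phi$ cancel in pairs — this uses the continuity of $\partial_x\phi$ (inherited since $\phi$ is continuous across $z=z_j$ and $x$-differentiation is tangential to the interface) together with the continuity of $bD\,\partial_z\partial_x\phi = \partial_x(bD\,\partial_z\phi)$, which follows from \eqref{IBC}. A secondary point worth stating explicitly is that \eqref{remark-norm-equi-00}, combined with the trivial reverse inequality from the first paragraph, is precisely the asserted equivalence, so no separate argument for "moreover" is needed beyond collecting the two bounds.
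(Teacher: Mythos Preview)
Your proposal is correct and follows the same strategy as the paper's proof: establish the $V$-bound, then horizontal $H^1$-regularity of $\partial_x\phi$ via $x$-differentiation (the paper does this formally, writing $\mathcal{L}(\partial_x\phi)=\partial_x f$ and invoking the $H^1$ estimate directly, whereas you rigorize it with difference quotients), and finally read off $bD\,\partial_z\phi\in H^1$ from the equation itself. Your extra care with the difference-quotient justification and the interfacial cancellations is appropriate, but the paper simply omits those details.
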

\begin{proof}
It is easy to see that $\|\mathcal{L} \phi\|_{L^2(\Omega)}\le C \|\phi\|_W \ \forall \phi\in W$.
For the opposite inequality, we assume that
$$
\mathcal{L} \phi =f \quad \text{ in }  \Omega,$$
and  $\phi$ satisfies \eqref{IBC}, \eqref{hBC}, together with the  periodicity in the horizontal direction(s). By the Lax-Milgram's theorem,  there exists a unique $\phi \in V$ such that
\begin{align}\label{remark-norm-equi-0}
\|\phi \|_{H^1(\Omega)} \leq C \|f\|_{L^2(\Omega)}.
\end{align}
By differentiating the equation with respect to the $x$ variable, we get $$
\mathcal{L} (\pa_{x}\phi) = \pa_{x}f  \quad \text{in  } \Omega,$$ which implies that
\begin{align}\label{remark-norm-equi-1}
\|\pa_{x}\phi \|_{H^1(\Omega)} \leq C \|f\|_{L^2(\Omega)}.
\end{align}
 Since
$-\pa_{z}(bD \pa_{z} \phi)= bD \pa_{x}(\pa_{x} \phi) +f,$ we deduce that
\begin{align}\label{remark-norm-equi-2}
  \|\pa_{z}(bD \pa_{z} \phi)\|_{L^2(\Omega)}\leq \| bD \pa_{x}(\pa_{x} \phi)\|_{L^2(\Omega)} + \|f\|_{L^2(\Omega)}\leq C  \|f\|_{L^2(\Omega)}.
\end{align}
On the other hand,  by the equation
\begin{align}
  \|\pa_{x}(bD \pa_{z} \phi)\|_{L^2(\Omega)}=\| bD \pa_{z}(\pa_{x} \phi)\|_{L^2(\Omega)}  \leq C\|\pa_{x}\phi \|_{H^1(\Omega)} \leq C  \|f\|_{L^2(\Omega)}, \notag
\end{align}
Combining this with \eqref{remark-norm-equi-0},  \eqref{remark-norm-equi-1} and \eqref{remark-norm-equi-2}, we derive \eqref{remark-norm-equi-00}.
%\end{lemma}
\end{proof}

The next lemma indicates that the natural space $W$ is very similar to $H^2$ in terms of some of the commonly utilized Sobolev imbeddings.
%lemma 2.3
\begin{lemma} \label{remark-imdedding}
Moreover, $W$ is similar to $H^2$ in the sense that the following inequalities hold
 \begin{align} \label{remark-imdedding-1}
   \begin{split}\| \na \varphi \|_{L^p(\Omega)}   \leq C  \|\varphi\|_{W} ~ \text{ for }  d=2, \quad \text{ and } \quad
    \| \na \varphi \|_{L^6(\Omega)}   \leq C  \|\varphi\|_{W} ~\text{ for }  d=3.
    \end{split}
    \end{align}
and 
 \begin{equation}\label{phi-infinity}
  \|   \varphi \|_{L^\infty(\Omega)}   \leq C  \|\varphi\|^\frac12\|\varphi\|_{W}^\frac12 \quad \text{ for }  d=2, 
  \quad\text{ and}\quad
  \|   \varphi \|_{L^\infty(\Omega)}   \leq C  \|\varphi\|^\frac14\|\varphi\|_{W}^\frac34 \quad \text{ for }  d=3.
  \end{equation}
  \end{lemma}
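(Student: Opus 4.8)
The plan is to exploit the equivalence of norms from Lemma~\ref{remark-norm-equi}, which lets us replace $\|\varphi\|_W$ by $\|\mathcal{L}\varphi\|_{L^2(\Omega)}$, and then to run the argument in each layer $\Omega_j$ separately, patching the pieces together at the end. Concretely, given $\varphi\in W$ set $f=\mathcal{L}\varphi\in L^2(\Omega)$. From the proof of Lemma~\ref{remark-norm-equi} we already know $\partial_x\varphi\in H^1(\Omega)$ with $\|\partial_x\varphi\|_{H^1(\Omega)}\le C\|f\|_{L^2(\Omega)}$, and that within each layer $\varphi\big|_{\Omega_j}\in H^2(\Omega_j)$ with $\|\varphi\|_{H^2(\Omega_j)}\le C\|f\|_{L^2(\Omega)}$ (the only second derivative that needed the layer restriction is $\partial_z(b_jD_j\partial_z\varphi)=b_jD_j\partial_z^2\varphi$ on $\Omega_j$, which is controlled by \eqref{remark-norm-equi-2} together with the lower bound on $bD$). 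So $\varphi\in H^2(\Omega_j)$ for each $j$ with a uniform bound by $\|\varphi\|_W$.

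For the gradient estimate \eqref{remark-imdedding-1}, I would argue layerwise. On each $\Omega_j$ the standard Sobolev embedding $H^1(\Omega_j)\hookrightarrow L^p(\Omega_j)$ for $2\le p<\infty$ when $d=2$, and $H^1(\Omega_j)\hookrightarrow L^6(\Omega_j)$ when $d=3$, applied to each component of $\nabla\varphi$, gives
\begin{align}
\|\nabla\varphi\|_{L^p(\Omega_j)}\le C\|\nabla\varphi\|_{H^1(\Omega_j)}\le C\|\varphi\|_{H^2(\Omega_j)}\le C\|\varphi\|_{W}.\notag
\end{align}
Summing the $p$-th powers over the finitely many layers $j=1,\dots,\ell$ yields $\|\nabla\varphi\|_{L^p(\Omega)}\le C\|\varphi\|_W$, which is \eqref{remark-imdedding-1}. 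Note this step is robust precisely because $\nabla\varphi$ itself is genuinely in $H^1$ of each layer; the discontinuity of $\partial_z\varphi$ across interfaces is irrelevant to an $L^p$ (as opposed to $H^1$) statement on all of $\Omega$.

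For the $L^\infty$ bounds \eqref{phi-infinity}, I would use a Gagliardo--Nirenberg interpolation inequality, again layer by layer. In two dimensions $\|\varphi\|_{L^\infty(\Omega_j)}\le C\|\varphi\|_{L^2(\Omega_j)}^{1/2}\|\varphi\|_{H^2(\Omega_j)}^{1/2}$, and in three dimensions $\|\varphi\|_{L^\infty(\Omega_j)}\le C\|\varphi\|_{L^2(\Omega_j)}^{1/4}\|\varphi\|_{H^2(\Omega_j)}^{3/4}$; here one should be slightly careful that the Gagliardo--Nirenberg constant on $\Omega_j$ is uniform in $j$, which holds since there are finitely many fixed layers (alternatively one invokes a uniform extension operator, or simply notes the layers are fixed rectangular strips). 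Then, bounding $\|\varphi\|_{L^2(\Omega_j)}\le\|\varphi\|$, $\|\varphi\|_{H^2(\Omega_j)}\le C\|\varphi\|_W$, and taking the maximum over $j$ gives the claimed global estimates. I expect the main (minor) obstacle to be exactly this uniformity-in-$j$ bookkeeping together with cleanly citing the layerwise $H^2$ regularity established en route to Lemma~\ref{remark-norm-equi}; once that is in place the rest is a direct application of classical embeddings on a fixed bounded Lipschitz domain.
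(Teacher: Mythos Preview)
Your proposal is correct and follows essentially the same approach as the paper: both arguments observe that $\varphi\in H^2(\Omega_j)$ on each layer with $H^2$-norm controlled by $\|\varphi\|_W$, apply the standard Sobolev (resp.\ Gagliardo--Nirenberg) embeddings layerwise, and then sum or take the maximum over the finitely many strips. The only cosmetic difference is that you route through Lemma~\ref{remark-norm-equi} to extract the layerwise $H^2$ bound, whereas the paper reads it off directly from the decomposition of $\|\varphi\|_W^2$ as a sum over the $\Omega_j$.
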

  \begin{proof}
 
% remark 3.2

 Note that
 $$\|\varphi\|^2_{W}= \sum_{j=1}^\ell  \left(\|bD\nabla\varphi \|^2_{L^2(\Omega_j) }+ \|\pa_{x}\varphi \|^2_{H^1(\Omega_j)} + \|bD\pa_{z}\varphi \|^2_{H^1(\Omega_j)}\right).$$
 By standard Sobolev imbedding, we have  on each strip (each $j$)
  \begin{align} \notag \begin{split}\| \na \varphi \|^2_{L^p(\Omega_j)} &\leq C \big(\|  \varphi \|^2_{L^2(\Omega_j)} + \| \na^2 \varphi \|^2_{L^2(\Omega_j)}\big)\\
  & \leq C \left(\|\varphi \|^2_{H^1(\Omega_j) }+ \|\pa_{x}\varphi \|^2_{H^1(\Omega_j)} + \|bD\pa_{z}\varphi \|^2_{H^1(\Omega_j)}\right) \end{split}
  \end{align}   for any $ 1< p<\infty$ in the case $d=2$\footnote{The constant $C$ may depend on the exponent $p$.},
  and
  \begin{align} \notag
  \begin{split}\| \na \varphi \|^2_{L^6(\Omega_j)} &\leq C \big(\|  \varphi \|^2_{L^2(\Omega_j)} + \| \na^2 \varphi \|^2_{L^2(\Omega_j)}\big)\\
  & \leq C \left(\|\varphi \|^2_{H^1(\Omega_j) }+ \|\pa_{x}\varphi \|^2_{H^1(\Omega_j)} + \|bD\pa_{z}\varphi \|^2_{H^1(\Omega_j)}\right) \end{split}
  \end{align}  
   in the case $d=3$.

  Summing over $j$, we derive that, in the two dimensional case, $\forall p\in [2,\infty)$,
   \begin{equation} \label{remark-imdedding-1}
  \| \na \varphi \|_{L^p(\Omega)}   \leq C  \|\varphi\|_{W} ,
  \end{equation} 
  and in the three dimensional case
  \begin{equation}
    \| \na \varphi \|_{L^6(\Omega)}   \leq C  \|\varphi\|_{W}.
    \end{equation}
 Likewise, we have the imbedding
 \eqref{phi-infinity}, and 
 \begin{equation}
  \|   \varphi \|_{L^\infty(\Omega)}   \leq C  \|\varphi\|_{W} \quad \text{ for }  d=2, 3.
  \end{equation}
  %which by interpolation yields,
%   \begin{align}
%   \begin{split}\| \na \varphi \|_{L^p(\Omega)}   \leq C  \|\varphi\|_{W} \quad \text{ if }  d=2,\\
%    \| \na \varphi \|_{L^6(\Omega)}   \leq C  \|\varphi\|_{W} \quad \text{ if }  d=3,
%    \end{split}
%  \end{align}
\end{proof}

%%%%% S3
\section{Global Existence of Weak Solution}

This section is devoted to the global existence of weak solutions to problem \eqref{1.1}-\eqref{1.3} subject to the boundary conditions \eqref{IBC_up}, \eqref{IBC}, \eqref{BC}$_1$, \eqref{hBC} together with the periodic conditions in the horizontal direction(s), and the initial condition \eqref{IC}.

%{\color{red} via eigenfunctions of the 2nd order operator $\nabla\cdot(bD\nabla)$, truncation satisfies the same interfacial and boundary conditions for u, P, and $\phi$. Notice that the eigenfunctions of $\nabla\cdot(bD\nabla)$ satisfy the interfacial conditions \eqref{IBC} in the classical sense.  Hence, any finite linear combination of eigenfunctions would satisfy the interfacial boundary conditions \eqref{IBC} in the classical sense.
%The vertical regularity proof would be a little bit tricky since the test function we used is not in the test function space.  We could consider the case with constant porosity first.}

% main existence theorem
\begin{theorem}
Assume that $\phi_0 \in L^2(\Omega)$. The system \eqref{1.1}-\eqref{1.3} subject to the boundary and interfacial conditions \eqref{IBC_up}, \eqref{IBC}, \eqref{BC}$_1$, \eqref{hBC} together with the periodic conditions in the horizontal direction(s), and the initial condition \eqref{IC} admits a global weak solution $(\textbf{u},\phi, P)$ in the sense of Definition 2.1 if the porosity $b$ is a constant.
\end{theorem}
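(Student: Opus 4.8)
The plan is to construct a solution by a Faedo--Galerkin scheme based on the $\mathcal{L}$-eigenbasis $\{w_k\}_{k\ge1}\subset W$ produced in Section~2. For $n\ge1$ put $V_n=\mathrm{span}\{w_1,\dots,w_n\}$ and seek $\phi_n(t)=\sum_{k=1}^n g_k^n(t)w_k\in V_n$, letting $P_n\in V$ and $\mathbf{u}_n\in\mathbf{H}$ be the pressure and velocity associated with $\phi_n$ through \eqref{eqP} and \eqref{1.2}; by Lax--Milgram the map $\phi_n\mapsto(P_n,\mathbf{u}_n)$ is bounded and \emph{linear}, and $\mathrm{div}\,\mathbf{u}_n=0$ in $\Omega$ (distributionally), the interfacial jumps of $\mathbf{u}_n\cdot\mathbf{e}_z$ vanishing by construction. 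Projecting \eqref{wf} onto $V_n$ and using that $b$ is constant, that $\{w_k\}$ is orthonormal in $H$, and that $A(w_k,w_j)=\lambda_k\delta_{kj}$, we obtain the ODE system $b\,\dot g_j^n+\lambda_j g_j^n+B(\mathbf{u}_n,\phi_n,w_j)=0$, $g_j^n(0)=(\phi_0,w_j)$, $1\le j\le n$, whose right-hand side is a quadratic polynomial in $(g_1^n,\dots,g_n^n)$, so Picard--Lindel\"of gives a unique local solution. The constancy of $b$ is used exactly here: it is what makes the $\mathcal{L}$-eigenfunctions an admissible Galerkin basis compatible with the term $\langle b\,\partial_t\phi,\psi\rangle$.

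Second, uniform a priori bounds. Testing with $\phi_n$ and noting that $\mathrm{div}\,\mathbf{u}_n=0$ together with the boundary and interface conditions forces $B(\mathbf{u}_n,\phi_n,\phi_n)=-\tfrac12\int_\Omega(\mathrm{div}\,\mathbf{u}_n)\phi_n^2\,d\bx=0$, we get
\be
\frac{b}{2}\,\frac{d}{dt}\|\phi_n\|_H^2+\|\phi_n\|_V^2=0 ,
\ee
hence $\|\phi_n\|_{L^\infty(0,T;H)}^2+2b^{-1}\|\phi_n\|_{L^2(0,T;V)}^2\le b^{-1}\|\phi_0\|_H^2$ uniformly in $n$, so each $\phi_n$ is in fact global. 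By \eqref{p-phi}, \eqref{u-phi-lp} and $V\hookrightarrow L^q$ ($q<\infty$ arbitrary if $d=2$, $q=6$ if $d=3$), the sequences $P_n$ and $\mathbf{u}_n$ are bounded in $L^2(0,T;W^{1,q})$ and $L^2(0,T;L^q)$ respectively. For $\partial_t\phi_n$, since $\{w_k\}$ is orthogonal in both $H$ and $V$, the $H$-orthogonal projection $\Pi_n$ onto $V_n$ coincides with the $V$-orthogonal one, so $\|\Pi_n\psi\|_V\le\|\psi\|_V$; writing $b\langle\partial_t\phi_n,\psi\rangle=-A(\phi_n,\Pi_n\psi)-B(\mathbf{u}_n,\phi_n,\Pi_n\psi)$, integrating by parts in $B$, and using H\"older and interpolation ($\|\phi_n\|_{L^4}^2\le C\|\phi_n\|_H\|\phi_n\|_V$ with $\|\mathbf{u}_n\|_{L^4}\le C\|\phi_n\|_{L^4}$ for $d=2$; $\|\mathbf{u}_n\|_{L^3}\le C\|\phi_n\|_H^{1/2}\|\phi_n\|_V^{1/2}$ with $V\hookrightarrow L^6$ for $d=3$) bounds $\partial_t\phi_n$ in $L^2(0,T;V^*)$ when $d=2$ and in $L^{4/3}(0,T;V^*)$ when $d=3$, which suffices in both cases for the compactness step and for the initial datum.

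Third, passage to the limit. By Banach--Alaoglu a subsequence satisfies $\phi_n\rightharpoonup\phi$ in $L^2(0,T;V)$ and weak-$*$ in $L^\infty(0,T;H)$, $\partial_t\phi_n\rightharpoonup\partial_t\phi$ in $L^r(0,T;V^*)$, $\mathbf{u}_n\rightharpoonup\mathbf{u}$ in $L^2(0,T;L^q)$, $P_n\rightharpoonup P$ in $L^2(0,T;W^{1,q})$. The Aubin--Lions--Simon lemma (via the compact imbedding $V\hookrightarrow H$) gives $\phi_n\to\phi$ strongly in $L^2(0,T;H)$, and also $\phi_n\to\phi$ in $C([0,T];V^*)$ with $\phi_n(t)\rightharpoonup\phi(t)$ in $H$ for every $t$. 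Since the map from $\phi$ to its associated $(P,\mathbf{u})$ is linear and bounded from $L^2(0,T;H)$ into $L^2(0,T;V)\times L^2(0,T;L^2(\Omega))$ by \eqref{p-phi}--\eqref{u-phi-lp} with $p=2$, the limits $P,\mathbf{u}$ are precisely the pressure and velocity of $\phi$, and moreover $\mathbf{u}_n\to\mathbf{u}$ strongly in $L^2(0,T;L^2(\Omega))$. Fixing $\psi\in V_m$ and $0\le t_1<t_2\le T$, we let $n\to\infty$ (for $n\ge m$) in the time-integrated Galerkin identity: the $A$-term and the endpoint terms $(b\phi_n(t_i),\psi)$ pass by the above convergences, and the transport term $\int_{t_1}^{t_2}\int_\Omega(\mathbf{u}_n\cdot\nabla\psi)\phi_n\,d\bx\,dt$ passes because $\nabla\psi\in L^\infty(\Omega)$ (each $w_k$ is smooth in every closed layer $\overline{\Omega}_j$), using $\mathbf{u}_n\to\mathbf{u}$ and $\phi_n\to\phi$ in $L^2(0,T;H)$. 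Density of $\bigcup_m V_m$ in $V$ then yields \eqref{def-eq} for all $\psi\in V$, and $\phi(0)=\phi_0$ follows from $g_j^n(0)=(\phi_0,w_j)$; with the regularity established above, $(\mathbf{u},\phi,P)$ is a weak solution in the sense of Definition~\ref{def1}.

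I expect the principal difficulty to be the low spatial regularity of $\mathbf{u}$ and $P$ forced by the discontinuous coefficient $bD$: classical elliptic regularity is unavailable, and one must instead lean on the co-normal $W^{1,p}$ estimate \eqref{p-phi} for elliptic systems with partially small BMO coefficients to place $\mathbf{u}$ in $L^2(0,T;L^q)$, which is exactly what is needed to control $B(\mathbf{u}_n,\phi_n,\cdot)$ and to upgrade $\mathbf{u}_n\rightharpoonup\mathbf{u}$ to a strong limit. A second delicate point, and the very source of the hypothesis that $b$ is constant, is that the layered principal operator $\mathcal{L}$ is not diagonalized by trigonometric modes, so the Galerkin scheme must be built on the $\mathcal{L}$-eigenbasis in $W$: its simultaneous $H$- and $V$-orthogonality is what makes the bound on $\partial_t\phi_n$ work, and its compatibility with $\langle b\,\partial_t\phi,\psi\rangle$ breaks down once $b$ varies.
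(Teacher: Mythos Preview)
Your proposal is correct and follows the same overall strategy as the paper: Galerkin approximation on the $\mathcal{L}$-eigenbasis, the basic energy identity giving $\phi_n$ bounded in $L^\infty(0,T;H)\cap L^2(0,T;V)$, the same $L^2(0,T;V^*)$ (resp.\ $L^{4/3}(0,T;V^*)$) bound on $\partial_t\phi_n$ for $d=2$ (resp.\ $d=3$), and Aubin--Lions compactness. Two tactical differences are worth recording. First, you exploit that $\phi\mapsto(P,\mathbf{u})$ is linear and bounded to upgrade the strong $L^2(0,T;H)$ convergence of $\phi_n$ to \emph{strong} convergence of $\mathbf{u}_n$ in $L^2(0,T;L^2)$; the paper works only with weak convergence of $\mathbf{u}_n$ and must argue more carefully in the nonlinear term. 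Second, you pass to the limit first for $\psi\in V_m$ (where $\nabla\psi\in L^\infty$ because the eigenfunctions are piecewise smooth) and then invoke density of $\bigcup_m V_m$ in $V$, whereas the paper tests against general $\psi\in L^2(0,T;V)$ and has to split off and control the remainder $(I-Q_n)\psi$ separately. Both of your shortcuts are legitimate and make the limit passage slightly cleaner, but they are variations on the same proof rather than a different route.
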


\begin{proof}
We prove the existence of solutions via the standard Galerkin approximation utilizing the eigenfunctions of $\mathcal{L}$ studies in the previous section.

Consider the Galerkin approximation system
\begin{align}
&\operatorname{div}(\mathbf{u}_n)=0,   \label{1.1n}\\
&\mathbf{u}_n=- \frac{K}{\mu}\left(\nabla P_n+ \alpha \rho_0 g \phi_n \mathbf{e}_z \right)  ,\label{1.2n} \\
&b \frac{\partial \phi_n}{\partial t}+ Q_n(\mathbf{u_n} \cdot \nabla \phi_n)-\operatorname{div}(b D \nabla \phi_n)=0, \label{1.3n}
\end{align} with the initial condition
\be\label{ICn}
\phi_n(x, 0)=Q_n \phi_0(x),
\ee
where  $ u_n,\phi_n, P_n$ satisfy the boundary conditions \eqref{IBC_up}, \eqref{IBC}, \eqref{BC}$_1$ and  \eqref{hBC},  and $Q_n$ is the projection from $V$ onto the space $V_n= span\{w_1,\cdots, w_n \}$. We find the solution $\phi_n$ of the form $\phi_n=\sum_{k=1}^n c_k(t)w_k$.  Let $P_n$ be given by \eqref{eqP},  and $\bu_n$ given by \eqref{1.2'} with data $P_n, \phi_n$.   Since $\bu_n$ and $P_n$ are functions of $\phi_n$, and depend linearly on $c_k(t)$. The ordinary differential system \eqref{1.3n} admits a unique local in time solution $c_k(t), k=1,\cdots, n$. Multiply the equation with  $c_k(t) w_k$, sum over $k$ form $1$ to $n$, and utilize integration by parts together with the boundary conditions for $w_k$, we deduce that
\begin{align}
 \frac{d}{d t} \int_{\Omega} b|\phi_n|^2+\int_{\Omega} b D|\nabla \phi_n|^2 =0, \notag
 \end{align}
which implies that
\begin{align}\label{galerkin-eti-1}
\sup _{0 \leq t \leq T} \int_{\Omega}|\phi_n(t)|^2+\int_0^T \int_{\Omega} D|\nabla \phi_n|^2 \leq C \int_{\Omega}|\phi_0|^2\quad \forall n.
\end{align}
Hence,
\begin{equation}
  \phi_n \in L^\infty(0, T; H)\cap L^2(0,T; V),
 \end{equation}
 with bounds uniform in $n$ (independent of $n$).

We distinguish the case $d=2$ and $d=3$ to complete the remaining analysis.\\
\noindent\textbf{The case $d=2$.}
In view of \eqref{1.3n},  for any $\psi \in L^2(0,T; V) $ we deduce that 
\begin{align}\label{galerkin-eti-2}
\begin{split}
 \left|\int_0^T \langle b \partial_t\phi_n ,  \psi\rangle\right|& \le \left| \int_0^T A( \phi_n, \psi) dt\right|+ \left|\int_0^T B(\mathbf{u}_n,  \phi_n, Q_n \psi) d t \right|\\
 &\leq C   \int_0^T \int_\Omega \big|bD \na \phi_n|\,| \na \psi \big|dt +   \int_0^T \int_\Omega \big|  \mathbf{u}_n\cdot \na (Q_n \psi) \phi_n  \big| d\bx  d t  \\
 &\leq C  \left(\| \phi_n\|_{L^2(0,T; V)} +  \|\textbf{u}_n\|_{L^4(0,T; L^4(\Omega))}  \| \phi_n\|_{L^4(0,T; L^4)} \right) \| \psi\|_{L^2(0,T; V)}\\
  &\leq C  \left(\| \phi_n\|_{L^2(0,T; V)} (1+  \|\phi_n\|_{L^\infty(0,T; L^2(\Omega))})  \right)  \| \phi\|_{L^2(0,T; V)}\| \psi\|_{L^2(0,T; V)},
  \end{split}
\end{align} 
where we have used  the interpolation inequality 
$\|\varphi\|^4_{L^4}\le C \|\varphi\|_H^2\|\varphi\|_V^2$. 
This implies that $\partial_t \phi_n \in L^2(0,T; V^*)$.
Since  $\phi_n$ is bounded uniformly (in $n$) in $L^{\infty}\left(0, T ; L^2(\Omega)\right) \cap L^2\left(0, T ; V\right)$,  and $\pa_t\phi_n$ is bounded uniformly (in $n$) in $  L^2(0,T; V^*)$. Standard Sobolev imbedding implies that  $\phi_n$ is bounded uniformly (in $n$) in $L^{\infty}\left(0, T ; L^2(\Omega)\right) \cap L^4 (0, T ; L^4(\Omega))$.
By the Aubin-Simon type compactness results, there exists a function $\phi \in L^2(0,T;H)$ such that $\phi_n\longrightarrow \phi ~~\text{strongly in } ~L^2(0,T;H)$.
Thus, up to subsequences,
\begin{align}\label{con-phin}
\begin{split}
 &\phi_n\longrightarrow \phi ~~\text{strongly in } ~L^2(0,T;H), \text{ and weakly$^*$ ~in} ~L^\infty(0,T;H), \\
 &\phi_n\longrightarrow \phi ~~\text{weakly~in} ~L^2(0,T; V)  \text{ and } L^4(0,T; L^4(\Omega)), \\
    &  b\frac{\partial \phi_n}{\partial t}\longrightarrow  b\partial_t\phi ~~\text{weakly$^*$ ~in}~ L^2(0,T;V^*).
    \end{split}
 \end{align}
 \ignore{
 The strong convergence of $\phi_n$ in $L^2(0,T;H)$ together with the uniform bound in $L^2(0,T; V)$ and interpolation inequality implies
 \begin{equation}
  \phi_n\longrightarrow \phi ~~\text{strongly in } ~L^4(0,T;L^4).
 \end{equation}
 }

 Thanks to \eqref{p-phi} and \eqref{u-phi-lp}, $P_n$ is uniformly (in $n$) bounded in $L^\infty(0,T;  W^{1,2}(\Omega)) \cap L^4 (0, T ; W^{1,4}(\Omega))$, and  $\textbf{u}_n$ is also uniformly (in $n$) bounded in   $L^\infty(0,T; \mathbf{H}) \cap L^4 (0, T ; L^4(\Omega))$. Consequently,  there exist functions  $P\in L^\infty(0,T;  W^{1,2}(\Omega)) \cap L^4 (0, T ; W^{1,4}(\Omega))$, and $  \textbf{u}\in L^4(0,T; L^4(\Omega))\cap   L^\infty(0,T; \mathbf{H}) $ such that, up to subsequences,
 \begin{align}\label{con-un}
 \begin{split}
 &P_n \rightarrow P ~\text{  weakly in  } L^4(0,T; W^{1,4}(\Omega)), ~\text{  and weakly$*$ in }~ L^\infty(0,T; W^{1,2}(\Omega) ),\\
 &\textbf{u}_n \rightarrow \textbf{u} ~\text{  weakly in  } L^4(0,T; L^4(\Omega)), ~\text{  and weakly$*$ in }~ L^\infty(0,T; \mathbf{H}).
 \end{split}
\end{align}
Passing to the limit in $n$ in \eqref{1.1n} and \eqref{1.2n},   \eqref{1.1} and \eqref{1.2'} follow immediately. By standard Sobolev imbedding, $ \phi \in L^2(0,T; L^p(\Omega)) $ for any $1\leq p<\infty$. Thanks to \eqref{p-phi} and \eqref{u-phi-lp}, we know that $\mathbf{u} \in L^2\left(0, T ; L^p(\Omega)\right)$ and $P \in  L^2\left(0, T ; W^{1,p}(\Omega)\right) $, where $2 \le p <\infty$.

On the other hand, for any $\psi \in  L^2(0,T; V)\cap L^\infty(0,T; H)$
\begin{align}\label{con-eq-phi-1}
\int_0^T \langle b\frac{\partial \phi_n}{\partial t},  \psi \rangle dt + \int_0^T\int_\Omega b D \nabla \phi_n   \na \psi d\bx  dt+   \int_0^T\int_\Omega Q_n(\mathbf{u_n} \cdot \nabla \phi_n) \psi  =0.
\end{align}
By \eqref{con-phin},
\begin{align}\label{con-eq-phi-2}
\begin{split}
\int_0^T \langle b\frac{\partial \phi_n}{\partial t},  \psi \rangle dt \longrightarrow  \int_0^T \langle b\frac{\partial \phi }{\partial t},  \psi \rangle dt,\\
\int_0^T\int_\Omega b D \nabla \phi_n   \na \psi d\bx dt   \longrightarrow \int_0^T\int_\Omega b D \nabla \phi    \na \psi d\bx dt .
\end{split}
\end{align}

For the third term in \eqref{con-eq-phi-1}, we note that
\begin{align}\label{con-eq-phi-3}
\begin{split}
 \int_0^T\int_\Omega Q_n(\mathbf{u_n} \cdot \nabla \phi_n) \psi   & =  \int_0^T\int_\Omega  \mathbf{u_n} \cdot \nabla \phi_n  \psi +  \int_0^T\int_\Omega  \mathbf{u_n} \cdot  \nabla  \phi_n  (I-Q_n)\psi\\
 &\doteq \eqref{con-eq-phi-3}_1+ \eqref{con-eq-phi-3}_2.
 \end{split}
\end{align}
Note that
\begin{align} \notag
\begin{split}
 \eqref{con-eq-phi-3}_1= \int_0^T\int_\Omega  (\mathbf{u_n}-\textbf{u}) \cdot \nabla \phi   \psi + \int_0^T\int_\Omega  \mathbf{u_n}  \cdot \nabla (\phi_n-\phi)  \psi + \int_0^T\int_\Omega   \textbf{u}  \cdot \nabla \phi   \psi.
 \end{split}
\end{align}
The weak convergence of $u_n$ in $ L^4(0,T; L^4(\Omega))$ implies that
 \begin{align} \notag
\begin{split}
  \int_0^T\int_\Omega  (\mathbf{u_n}-\textbf{u}) \cdot \nabla \phi   \psi  \longrightarrow 0 \quad \text{ as } n \rightarrow \infty.
 \end{split}
\end{align}
The strong convergence of $\phi_n$ in $ L^2(0,T; L^2)$ and weak convergence in  $ L^2(0,T; V)$, together with simple interpolation, implies that
 \begin{align}\notag
\begin{split}
   \int_0^T\int_\Omega  \mathbf{u_n}  \cdot \nabla (\phi_n-\phi)  \psi= -\int_0^T\int_\Omega  \mathbf{u_n}  (\phi_n-\phi)  \nabla \psi  \longrightarrow 0 \quad \text{ as } n \rightarrow \infty.
 \end{split}
\end{align}
We therefore obtain that
\begin{align} \label{con-eq-phi-5}
\begin{split}
 \eqref{con-eq-phi-3}_1 \rightarrow \int_0^T\int_\Omega   \textbf{u}  \cdot \nabla \phi   \psi.
 \end{split}
\end{align}
By taking $\psi= \sum_{k=1}^m \alpha_k(t) w_k$ with $\alpha_k(t) \in C^1([0,T]; \R)$ and $w_k \in V$, we deduce that
\begin{align}\notag
 \int_0^T\int_\Omega  \mathbf{u_n} \cdot  \nabla  \phi_n  (I-Q_n)\psi= \int_0^T\int_\Omega  \alpha_k(t) \mathbf{u_n} \cdot  \nabla  \phi_n  (I-Q_n) w_k \rightarrow 0,
\end{align}
where we have used the observation $ (I-Q_n) w_k \rightarrow 0 $ in $V$. Since functions of the form $ \sum_{k=1}^m \alpha_k(t) w_k$  with $\alpha_k(t) \in C^1([0,T]; \R)$ and $w_k \in V$ are dense in $ L^2(0,T; V)$, it follows that
\begin{align}\notag
 \int_0^T\int_\Omega  \mathbf{u_n} \cdot  \nabla  \phi_n  (I-Q_n)\psi \rightarrow 0,
\end{align}
for any $ \psi \in  L^2(0,T; V)$. This, combined with \eqref{con-eq-phi-3}, \eqref{con-eq-phi-5},  and also \eqref{con-eq-phi-1} and \eqref{con-eq-phi-2},  gives
\begin{align}\label{con-eq-phi-6}
\int_0^T \langle b\frac{\partial \phi }{\partial t},  \psi \rangle dt + \int_0^T\int_\Omega b D \nabla \phi    \na \psi d\bx dt +   \int_0^T\int_\Omega  \mathbf{u } \cdot \nabla \phi   \psi d\bx dt   =0,
\end{align}
for any $\psi \in  L^2(0,T; V).$  For any $q\in V$ and $t_1, t_2 \in[0,T]$, by taking  $\psi = \chi_{[t_1,t_2]} q$ it follows that  
\begin{align}\left(b \phi(t_2), q\right)-\left(b \phi(t_1), q\right)+\int_{t_1}^{t_2}A( \phi, q) dt+\int_{t_1}^{t_2}B(\mathbf{u},  \phi,q) d t=0, \notag
 \end{align}
which is exactly \eqref{def-eq}. 

\noindent\textbf{The case $d=3$.}
Parallel  to \eqref{galerkin-eti-2},   we have
\begin{align}\label{galerkin-3d-1}
\begin{split}
 \left|\int_0^T \langle b \partial_t\phi_n ,  \psi\rangle\right|& = \left| \int_0^T A( \phi_n, \psi) dt\right|+ \left|\int_0^T B(\mathbf{u}_n,  \phi_n, Q_n \psi) d t \right|\\
 &\leq C  \| \phi_n\|_{L^2(0,T; V)} \| \psi\|_{L^2(0,T; V)}+  \int_0^T \|\textbf{u}_n\|_{L^3(\Omega)}  \| \nabla\phi_n\|_{L^2(\Omega)}   \| \psi\|_{V}\\
  &\leq C  \| \phi_n\|_{L^{2}(0,T; V)} \| \psi\|_{L^4(0,T; V)}+  \int_0^T \|\phi_n\|^{1/2}_{L^2(\Omega)}  \|\nabla \phi_n\|^{3/2}_{L^2(\Omega)}   \| \psi\|_{V}\\
  &\leq C  \left(\| \phi_n\|_{L^2(0,T; V)} +  \|\phi_n\|_{L^\infty(0,T; L^2(\Omega))}  \right)  \| \phi_n\|_{L^2(0,T; V)}\| \psi\|_{L^4(0,T; V)},
  \end{split}
\end{align} which implies that $\partial_t \phi_n$ is uniformly (in $n$) bounded in $L^{4/3}(0,T; V^*)$. By the Aubin-Simon type compactness results, there exists a function $\phi \in L^2(0,T;H)$ such that $\phi_n\longrightarrow \phi ~~\text{strongly in } ~L^2(0,T;H)$.
Thus up to subsequences,
\begin{align}\label{galerkin-3d-2}
\begin{split}
 &\phi_n\longrightarrow \phi ~~\text{strongly in } ~L^2(0,T;H), \text{ and weakly$^*$ ~in} ~L^\infty(0,T;H), \\
 &\phi_n\longrightarrow \phi ~~\text{weakly~in} ~L^2(0,T; V) ~\text{ and }~ L^2(0,T; L^6(\Omega)),\\
    & b \frac{\partial \phi_n}{\partial t}\longrightarrow b\partial_t\phi ~~\text{weakly$^*$ ~in}~ L^{4/3}(0,T;V^*),\\
     &P_n \rightarrow P ~\text{  weakly in  } L^2(0,T; W^{1,6}(\Omega)), ~\text{ weakly$*$ in }~ L^\infty(0,T; W^{1,2}(\Omega) ),\\
 &\textbf{u}_n \rightarrow \textbf{u} ~\text{  weakly in  } L^2(0,T; L^{6}(\Omega)), ~\text{  and weakly$*$ in }~ L^\infty(0,T; \mathbf{H}).
    \end{split}
 \end{align}
Therefore \eqref{con-eq-phi-2} holds for any  $ \psi \in L^4(0,T; V)$. By performing  similar analysis as in \eqref{con-eq-phi-3}--\eqref{con-eq-phi-5}, we can deduce that
\begin{align}\label{galerkin-3d-3}
 \int_0^T\int_\Omega Q_n(\mathbf{u_n} \cdot \nabla \phi_n) \psi  \longrightarrow \int_0^T\int_\Omega  \mathbf{u } \cdot \nabla \phi  \psi
 \end{align}
 for any $ \psi \in L^4(0,T; V)$, which together with \eqref{con-eq-phi-2} gives  \eqref{con-eq-phi-6} and therefore \eqref{def-eq}. 

Finally, we choose a test function $\psi\in C^1([0,T]; V)$ with $\phi(T)=0$ in \eqref{con-eq-phi-6} and the Galerkin approximate problem  \eqref{con-eq-phi-1}.
By passing to the limits in $n$ and using the fact that $\phi_n(0)=Q_n \phi_0\rightarrow \phi(0)$, we obtain that $ \phi(0)=\phi_0(x)$.
The existence of weak solutions is thus proved.
\end{proof}

% S4
\section{Two-dimensional  Regularity and Uniqueness}
\begin{theorem}
In the case $d=2$, the weak solution $(\textbf{u}, \phi, P)$  to problem \eqref{1.1}-\eqref{1.3} subject to the boundary conditions \eqref{IBC_up}, \eqref{IBC}, \eqref{BC}$_1$, \eqref{hBC} together with the periodic conditions in the horizontal direction, and the initial condition \eqref{IC} is unique. If $\phi_0 \in V$,   we have $ \phi_{x}, \textbf{u}_{x}\in L^\infty(0,T; L^2(\Omega) ) \cap L^2(0,T; H^1(\Omega) ),$ and    $ u_2 \in L^\infty(0,T; H^1(\Omega))$ for any $T>0$. Moreover, if the porosity $b$ is a constant  we have \begin{align}
\phi \in L^\infty(0,T; V ) \cap L^2(0,T; W),
\end{align}
\end{theorem}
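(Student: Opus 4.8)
The plan is to establish, in order: (a) uniqueness, by a Grönwall estimate on the difference of two solutions; (b) the horizontal regularity of $\phi_x,\mathbf u_x$ and the gain $u_2\in L^\infty(0,T;H^1)$, by differentiating \eqref{1.3} in the periodic variable $x$; (c) when $b$ is constant, the full $W$-regularity, by testing \eqref{1.3} against $\mathcal L\phi$ and exploiting (b) together with the norm equivalence $\|\phi\|_W\simeq\|\mathcal L\phi\|_{L^2(\Omega)}$ of Lemma~\ref{remark-norm-equi}. For uniqueness, set $\phi=\phi_1-\phi_2$, $\mathbf u=\mathbf u_1-\mathbf u_2$, $P=P_1-P_2$; by linearity of \eqref{eqP}, $(\mathbf u,P)$ is the Darcy pair of $\phi$, so $\|\mathbf u\|_{L^4(\Omega)}\le C\|\phi\|_{L^4(\Omega)}$. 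Writing $\mathbf u_1\cdot\nabla\phi_1-\mathbf u_2\cdot\nabla\phi_2=\mathbf u\cdot\nabla\phi_1+\mathbf u_2\cdot\nabla\phi$ and testing the subtracted weak formulation with $\psi=\chi_{[0,t]}\phi$ (legitimate since $\phi\in L^2(0,T;V)$, $\partial_t\phi\in L^2(0,T;V^*)$), the term $B(\mathbf u_2,\phi,\phi)$ drops by incompressibility of $\mathbf u_2$ and $\mathbf u_2\cdot\mathbf e_z|_{z=0,-H}=0$, so
\[
\tfrac12\tfrac{d}{dt}\!\int_\Omega b|\phi|^2+\int_\Omega bD|\nabla\phi|^2=-B(\mathbf u,\phi_1,\phi) .
\]
By Hölder, \eqref{u-phi-lp} and $\|\varphi\|_{L^4}^4\le C\|\varphi\|_H^2\|\varphi\|_V^2$, the right side is $\le\tfrac12(bD)_{\min}\|\nabla\phi\|_{L^2}^2+C\|\nabla\phi_1\|_{L^2}^2\|\phi\|_H^2$; since $\|\nabla\phi_1\|_{L^2}^2\in L^1(0,T)$ and $\phi(0)=0$, Grönwall forces $\phi\equiv0$, hence $\mathbf u\equiv0$, $P\equiv0$.

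\emph{Horizontal regularity.} Because $b,D,K$ depend only on $z$ and $x$ is periodic, one may differentiate \eqref{1.3} in $x$; made rigorous via difference quotients $D_h^x\phi=h^{-1}(\phi(\cdot+he_1,\cdot)-\phi)$ (or a Galerkin basis invariant under $\partial_x$, which the eigenbasis of $\mathcal L$ is), the limit $\phi_x$ satisfies $b\partial_t\phi_x-\operatorname{div}(bD\nabla\phi_x)=-\mathbf u_x\cdot\nabla\phi-\mathbf u\cdot\nabla\phi_x$ with the same interfacial/boundary conditions. Testing with $\phi_x$ annihilates $\int(\mathbf u\cdot\nabla\phi_x)\phi_x$, and with the $x$-differentiated elliptic bound $\|\mathbf u_x\|_{L^4}\le C\|\phi_x\|_{L^4}$ together with $\|\phi_x\|_{L^4}^2\le C\|\phi_x\|_{L^2}(\|\phi_x\|_{L^2}+\|\nabla\phi_x\|_{L^2})$ one gets, after a Young absorption,
\[
\tfrac12\tfrac{d}{dt}\!\int_\Omega b|\phi_x|^2+\tfrac12\!\int_\Omega bD|\nabla\phi_x|^2\le C\big(\|\nabla\phi\|_{L^2}+\|\nabla\phi\|_{L^2}^2\big)\|\phi_x\|_{L^2}^2 .
\]
Since $\|\nabla\phi\|_{L^2}+\|\nabla\phi\|_{L^2}^2\in L^1(0,T)$ and $\partial_x\phi_0\in L^2(\Omega)$, Grönwall yields $\phi_x\in L^\infty(0,T;L^2)\cap L^2(0,T;H^1)$; the same bounds for $\mathbf u_x$ follow from Darcy's law and the $x$-differentiated version of \eqref{eqP} (invoking \cite{dongli21jfa} once more). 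Finally $\operatorname{div}\mathbf u=0$ gives $\partial_z u_2=-\partial_x u_1\in L^\infty(0,T;L^2)$, while $\partial_x u_2=-\tfrac K\mu(\partial_x\partial_z P+\alpha\rho_0 g\,\phi_x)\in L^\infty(0,T;L^2)$, and since $u_2$ is continuous across the interfaces and vanishes at $z=0,-H$, $u_2\in L^\infty(0,T;H^1(\Omega))$.

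\emph{Full regularity for constant $b$.} Now $\mathcal L\phi=-b\partial_t\phi-\mathbf u\cdot\nabla\phi$. Testing \eqref{1.3} against $\mathcal L\phi$ — rigorously at the Galerkin level, using $\mathcal L\phi_n=\sum_k\lambda_kc_kw_k\in V_n$ and $\|\phi_n\|_V^2=\sum_k\lambda_kc_k^2$ — gives $\tfrac b2\tfrac{d}{dt}\|\phi\|_V^2+\|\mathcal L\phi\|_{L^2}^2=-B(\mathbf u,\phi,\mathcal L\phi)\le\tfrac12\|\mathcal L\phi\|_{L^2}^2+\tfrac12\|\mathbf u\cdot\nabla\phi\|_{L^2}^2$. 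Split $\mathbf u\cdot\nabla\phi=u_1\phi_x+u_2\phi_z$. For the first piece, $\|u_1\|_{L^4}\le C\|\phi\|_{L^4}$ and $\|\phi_x\|_{L^4}^2\le C\|\phi_x\|_{L^2}\|\phi_x\|_{H^1}$ give $\|u_1\phi_x\|_{L^2}^2\le C\|\phi\|_{L^4}^2\|\phi_x\|_{L^2}\|\phi_x\|_{H^1}=:g(t)$, and $g\in L^1(0,T)$ because its three factors lie in $L^2_t$, $L^\infty_t$, $L^2_t$ by the basic energy bound and the previous step. For the second piece, $u_2\in L^\infty(0,T;H^1)\hookrightarrow L^\infty(0,T;L^4)$ and the interpolated imbedding $\|\phi_z\|_{L^4}^2\le C\|\phi_z\|_{L^2}\|\phi\|_W\le C\|\phi\|_V\|\mathcal L\phi\|_{L^2}$ (the Gagliardo–Nirenberg form of Lemma~\ref{remark-imdedding}, combined with Lemma~\ref{remark-norm-equi}) give, after Young, $\|u_2\phi_z\|_{L^2}^2\le\tfrac12\|\mathcal L\phi\|_{L^2}^2+C\|u_2\|_{H^1}^4\|\phi\|_V^2$ with $\|u_2\|_{H^1}^4\in L^\infty(0,T)$. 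Collecting,
\[
\tfrac b2\tfrac{d}{dt}\|\phi\|_V^2+\tfrac14\|\mathcal L\phi\|_{L^2(\Omega)}^2\le g(t)+C\|u_2\|_{H^1}^4\,\|\phi\|_V^2 ,
\]
a \emph{linear} Grönwall inequality with $L^1(0,T)$-forcing; since $\phi_0\in V$ it yields $\phi\in L^\infty(0,T;V)$, and integrating in $t$ then gives $\mathcal L\phi\in L^2(0,T;L^2(\Omega))$, i.e. $\phi\in L^2(0,T;W)$ by Lemma~\ref{remark-norm-equi}.

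\emph{Main obstacle.} The key difficulty is that the naive bound $\|\mathbf u\cdot\nabla\phi\|_{L^2}\le\|\mathbf u\|_{L^4}\|\nabla\phi\|_{L^4}\le C\|\phi\|_{L^4}\|\phi\|_W$ only produces the Riccati inequality $\tfrac{d}{dt}\|\phi\|_V^2\lesssim\|\phi\|_V^4$, which does not preclude finite-time blow-up; avoiding it is precisely why one must first extract $\phi_x\in L^\infty_tL^2\cap L^2_tH^1$ and the incompressibility gain $u_2\in L^\infty_tH^1$, and only then feed these in as time-integrable coefficients in the $\mathcal L\phi$-estimate. The remaining technical care lies in making the $x$-differentiation rigorous while respecting the transmission conditions \eqref{IBC}, and in the piecewise Gagliardo–Nirenberg bookkeeping over the strips $\Omega_j$ that underlies the interpolated $W$-imbedding used above.
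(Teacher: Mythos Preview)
Your proof is correct; uniqueness and horizontal regularity follow the paper almost verbatim. The genuine divergence is in the full $W$-regularity step when $b$ is constant.

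You split $\mathbf u\cdot\nabla\phi=u_1\phi_x+u_2\phi_z$ and feed in the horizontal regularity from part (b) --- $\phi_x\in L^\infty_tL^2\cap L^2_tH^1$ and $u_2\in L^\infty_tH^1$ --- as time-integrable coefficients, obtaining a linear Gr\"onwall inequality. The paper instead bounds the nonlinear term in one stroke via the interpolated imbedding $\|\nabla\phi\|_{L^4}\le C\|\nabla\phi\|_{L^2}^{1/2}\|\mathcal L\phi\|_{L^2}^{1/2}$ applied to the \emph{full} gradient, giving
\[
\Big|\int_\Omega(\mathbf u\cdot\nabla\phi)\,\mathcal L\phi\Big|\le C\|\phi\|_{L^4}\,\|\nabla\phi\|_{L^2}^{1/2}\,\|\mathcal L\phi\|_{L^2}^{3/2}\le \tfrac12\|\mathcal L\phi\|_{L^2}^2+C\|\phi\|_{L^4}^4\|\nabla\phi\|_{L^2}^2,
\]
after which Gr\"onwall closes with coefficient $\|\phi\|_{L^4}^4\in L^1(0,T)$, available already from the basic energy bound. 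So your ``Main obstacle'' paragraph slightly misdiagnoses the situation: the direct Hölder--interpolation route does \emph{not} produce a Riccati inequality once one interpolates $\|\nabla\phi\|_{L^4}$ rather than bounding it crudely by $\|\phi\|_W$; the paper's argument never uses the horizontal regularity of step (b) for step (c). Your route still works and is logically sound, but it is more elaborate than necessary; what it buys is that the splitting idea would also be useful in settings where the direct interpolation is unavailable or yields a worse power.
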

\begin{proof}
\noindent\textbf{Uniqueness.} Let $(\mathbf{u}, \phi,P_1),(\mathbf{v}, \varphi,P_2)$ be the solutions to problem \eqref{1.1}-\eqref{1.3}  subject to the boundary conditions \eqref{IBC_up}, \eqref{IBC}, \eqref{BC}$_1$, \eqref{hBC} together with the periodic conditions in the horizontal directions, and the initial condition \eqref{IC}. Set $U=\mathbf{u}-\mathbf{v}, \Phi=\phi-\varphi$ and $\hat{P}=P_1-P_2$. We have
\begin{align}
&\operatorname{div}(U) =0, \label{2.24} \\
&U=-\frac{K}{\mu} (\nabla \hat{P}  +\rho_0 \alpha \Phi  g\mathbf{e}_z  ) , \label{2.25} \\
&b \frac{\partial \Phi}{\partial t}+U \cdot \nabla \phi+\mathbf{v} \cdot\nabla \Phi  -\operatorname{div}(b D \nabla \Phi)=0, \label{2.26}
\end{align}
with
\begin{equation}
\begin{aligned}
&U_2\left(x, 0 ; t\right)=U_2\left(x,-H ; t\right)=0, \,\,\,\text { and } \, U\left(0, z ; t\right)=U\left(1, z ; t\right),\\
&\Phi\left(x, 0 ; t\right)=\Phi\left(x, -H ; t\right)=0, \,\,\,\text { and }\, \Phi\left(0, z ; t\right)=\Phi\left(1, z ; t\right).
\end{aligned}
\end{equation}
Multiplying \eqref{2.26} with $\Phi(t)$ and integrating over $\Omega$, we deduce
 \begin{equation}\label{uni1}
\begin{split}
\frac{d}{d t} \int_{\Omega} b|\Phi|^2+\int_{\Omega} b D|\nabla \Phi|^2 &\leq C \left|\int_{\Omega} U \cdot \nabla \Phi \phi \right| \\
& \leq C\|U\|_{L^4(\Omega)}\|\phi\|_{L^{4}(\Omega)}\|\nabla \Phi\|_{L^2(\Omega)}.
\end{split}
\end{equation}
In view of \eqref{u-phi-lp}, we know that
\begin{equation}\label{Phi-U}
\|U(t)\|_{L^4(\Omega)} \leq C\|\Phi(t)\|_{L^4(\Omega)}, \quad \forall t \ge 0.
\end{equation}
Hence, by interpolation,  the RHS of \eqref{uni1} can be bounded as
\begin{equation}\label{uni2}
\begin{split}
\int_{\Omega} U \cdot \nabla \Phi \phi
& \leq C\|U\|_{L^4(\Omega)}\|\phi\|_{L^{4}(\Omega)}\|\nabla \Phi\|_{L^2(\Omega)}\\
&\leq C \|\Phi\|_{L^4(\Omega)}\|\nabla \Phi\|_{L^2(\Omega)}\|\phi\|_{L^{4}(\Omega)}\\
& \leq \|\nabla \Phi\|^{\f{3}{2}}_{L^2(\Omega)}\|\Phi|^{\f{1}{2}}_{L^2(\Omega)}\|\phi\|^{\f{1}{2}}_{L^{2}(\Omega)}\|\nabla\phi\|^{\f{1}{2}}_{L^{2}(\Omega)}\\
& \leq C\|\Phi\|_{L^2(\Omega)}^2\|\phi\|_{L^{2}(\Omega)}^2\|\nabla \phi\|_{L^{2}(\Omega)}^2+\varepsilon_0\|\nabla \Phi\|_{L^2(\Omega)}.
\end{split}
\end{equation}
Here  $\varepsilon_0$ is set as $1/2 \underset{1 \leq j \leq \ell}{\min}\{b_j D_j\}$.  
Inserting  \eqref{uni2} into \eqref{uni1} and using the Gronwall inequality, we obtain that
\begin{equation}
   \|\Phi(t)\|_{L^2(\Omega)}^2 \le  C \|\Phi(0)\|^2_{L^2(\Omega)} \exp \left\{ C \|\phi\|^2_{L^\infty(0,T;H)}\|\phi\|^2_{L^2(0,T;V)} \right\}.
\end{equation}
Since $\Phi(0)=0 $,  we get
 $|\Phi(t)|\equiv 0 , \forall t \in [0, T].$
Furthermore  by \eqref{2.24} and \eqref{2.25} $\hat{P}=0$ up to a constant, and therefore
 $ |U(t)| \equiv 0 , \forall t \in [0, T]$.
The uniqueness is thus proved.

\noindent\textbf{Horizontal regularity} We first consider the horizontal regularity.  Since  $b, D$ are independent of $x$, we differentiate \eqref{1.3} with respect to $x$  to get
\begin{align}
b \frac{\partial (\pa_{x} \phi) }{\partial t}+\mathbf{u} \cdot \nabla \pa_{x} \phi + \mathbf{u}_{x} \cdot \nabla \phi-\operatorname{div}(b D \nabla \pa_{x} \phi )=0  \quad \text{ in } \Omega.\label{1.3difx1}
\end{align}
Note that  $ \pa_{x} \phi \in L^2(0,T; V)$. Multiplying the above equation by $\pa_{x} \phi $ and integrating over $\Omega$, we obtain that
\begin{align}\label{hre01}
&\frac{1}{2} \frac{d}{d t} \int_{\Omega} b |\pa_{x} \phi |^2 +\int_{\Omega} b D|\nabla \pa_{x} \phi |^2
\leq \left|\int_{\Omega}  \mathbf{u}_{x}\cdot \nabla \phi \pa_{x} \phi \right|.
\end{align}
By H\"{o}lder's inequality and by interpolation,
\begin{align} \notag
 \left|\int_{\Omega }  \mathbf{u}_{x}\cdot \nabla \phi \pa_{x} \phi  \right|
 &\leq C \|u_{x}\|_{L^4(\Omega)}\|\pa_{x} \phi \|_{L^{4}(\Omega)}\|\nabla \phi\|_{L^2(\Omega)}   \notag\\
 &\leq C \|\pa_{x} \phi \|^2_{L^4(\Omega)}  \|\nabla\phi\|_{L^2(\Omega)}  \notag\\
&\leq C  \|  \pa_{x} \phi \|^2_{L^2(\Omega)} \|\nabla\phi\|^2_{L^2(\Omega)}+  \varepsilon_0 \| \nabla\pa_{x} \phi \|_{L^{2}(\Omega)}, \notag
\end{align}
 where $\varepsilon_0 =\frac{1}{8}\min_{1\leq j\leq\ell}\{b_jD_j\}$.
 Combing this with \eqref{hre01} yields that
\begin{align}\notag
 \frac{d}{d t} \int_{\Omega} b |\pa_{x} \phi |^2+  \int_{\Omega} b D|\nabla \pa_{x} \phi |^2\leq C  \|  \pa_{x} \phi \|^2_{L^2(\Omega)} \|\nabla\phi\|^2_{L^2(\Omega)}.
\end{align}
As a result,  the Gronwall inequality gives
\begin{align}\label{hre02}
\begin{split}
& \int_{\Omega} b |\pa_{x} \phi (t)|^2\leq   \int_{\Omega} b |\pa_{x} \phi (0)|^2 \exp\left\{  C\| \phi \|^2_{L^2(0,T; V)} \right\},\\
&\int_0^T  \int_{\Omega} b D|\nabla \pa_{x} \phi |^2 \leq C\left\{1+\| \phi(t)\|^2_{L^2(0,T; V)}\exp \{ C \| \phi(t)\|^2_{L^2(0,T; V)}  \} \right\} \int_{\Omega} b |\pa_{x} \phi (0)|^2,
\end{split}
\end{align}
which implies that $ \phi_{x} \in L^\infty(0,T; L^2(\Omega) \cap L^2(0,T; H^1(\Omega))$. By interpolation,  $\phi_{x}  \in L^4(0,T; L^4(\Omega) ).$
In view of \eqref{u-phi-lp}, we have  $ \mathbf{u}_{x} \in L^\infty(0,T; L^2(\Omega) ) \cap L^4(0,T; L^4(\Omega) ).$
Moreover, since $\partial_{z}u_2 = -\partial_{x}u_1$ by the imcompressiblity, if follows that $ u_2 \in L^\infty(0,T; H^1(\Omega))$.

\noindent\textbf{Vertical regularity} We now investigate the regularity in the case that the porosity $b$ is a constant.  Multiply the equation \eqref{1.3} by $ \text{div}(bD \na \phi )$ and integrate over $\Omega$, it follows that
%\begin{align}\label{hre03}
%\begin{split}
%&\frac{1}{2} \frac{d}{d t} \int_{\Omega_j} b D |\pa_{z} \phi |^2+ \int_{\Omega_j} \partial_{x} ( b D  \pa_{x} \phi ) \frac{1}{b} \partial_{z}(bD \pa_{z} \phi )  + \int_{\Omega_j} \frac{1}{b} \big|\partial_{z}(bD \pa_{z} \phi )\big|^2  \\
%&=  - \int_{\Gamma_j^+}b D \pa_{z} \phi  \phi_t d \sigma +\int_{\Gamma_j^-}   b D \pa_{z} \phi  \phi_t d \sigma\\
% &\quad - \int_{\Omega_j}  u_1 \pa_{x} \phi  \frac{1}{b} \pa_{z} ( b D  \pa_{z} \phi)   + \int_{\Omega_j}  u_d  \pa_{z} \phi  \frac{1}{b}\pa_{z}  ( b D  \pa_{z} \phi)  .
%\end{split}\end{align}
%By taking summation and using the interfacial boundary conditions, we get
\begin{align}\label{hre04}
\begin{split}
\frac{d}{d t} \int_{\Omega} b D |\na \phi |^2+  2\int_{\Omega}   \big|\text{div} (bD \na \phi )\big|^2
 &\le  2 \int_{\Omega} |u \cdot \na \phi |  |\text{div}  ( b D  \pa_{z} \phi)|.
  \end{split}\end{align}
In view of \eqref{remark-norm-equi-00} and the interpolation of Sobolev spaces in 2D,
\begin{align}\label{inter-po}
 \|\na \phi \|_{L^4(\Omega)}\leq C  \|\na \phi \|^{1/2}_{L^2(\Omega)}  \|\phi \|^{1/2}_{W}\leq C  \|\na \phi \|^{1/2}_{L^2(\Omega)}  \| \text{div}  ( b D  \na\phi) \|^{1/2}_{L^2(\Omega)}.
\end{align}
By H\"{o}lder's inequality, we deduce from \eqref{hre04} that
\begin{align}\label{hre06}
\begin{split}
 \frac12\frac{d}{d t} \int_{\Omega} b D |\na \phi |^2+   \int_{\Omega}   \big|\text{div} (bD \na \phi )\big|^2
 &\leq C  \|u \|_{L^4(\Omega)}  \|\na \phi \|^{1/2}_{L^2(\Omega)}  \|\text{div}  ( b D  \pa_{z} \phi)\|^{3/2}_{L^2(\Omega)}\\
 &\leq C  \|\phi\|^4_{L^4(\Omega)}  \|\na \phi \|^2_{L^2(\Omega)} +\frac12\|\text{div} (bD \na \phi )\|_H^2,
  \end{split}\end{align}
  which by Gronwall's inequality implies that
  \begin{align}
   \|\na \phi (t) \|^2_{L^2(\Omega)} \leq C  \|\na \phi_0 \|^2_{L^2(\Omega)}\exp\{C\|\phi\|^4_{L^4(0, T; L^4(\Omega))}   \}\leq C   \|\na \phi_0 \|^2_{L^2(\Omega)}\exp\{C\|\phi\|^4_{L^\infty(0, T; L^2(\Omega))} +C \|\phi\|^4_{L^2(0, T; H^1(\Omega))} \} \notag
  \end{align}
 for any $0<t\leq T$. 
 Utilizing this estimate in integrating \eqref{hre06} over $(0,T)$, it yields
 \begin{align}
   \|  \phi\|^2_{L^2(0,T; W)}  &\leq C  \{\|\phi\|^4_{L^\infty(0, T; L^2(\Omega))} + \|\phi\|^4_{L^2(0, T; H^1(\Omega))} \} \notag\\
   &\quad \cdot \exp\{C\|\phi\|^4_{L^\infty(0, T; L^2(\Omega))} +C \|\phi\|^4_{L^2(0, T; H^1(\Omega))} \}  + C\|\phi_0\|_V. \notag
  \end{align}
  The proof is thus complete.
\end{proof}
\begin{remark}
The above tangential a priori estimate is equivalent to taking $\psi=-\phi_{xx}$ in the weak formulation. Such a test function is allowed at the Galerkin level provided we take the eigenfunctions of the second order elliptic operator as the basis. These eigenfunctions are smooth in the horizontal direction(s), and the horizontal derivatives satisfy the same boundary and interfacial conditions as required of the function space.
%Likewise, the vertical a priori estimate can be justified rigorously utilizing Galerkin approximation based on the eigenfunctions of $\mathcal{L}$.
\end{remark}

\begin{remark}
 In the case of constant porosity ($b$ is a constant), we have used $-\mathcal{L}\phi$ as a test function and performed the formal calculations. Such a test function is allowable and all the calculations could be justified rigorously by the Galerkin approximation.

If $b$ is not constant, one formally needs to take  $- \frac{1}{b}\mathcal{L}\phi$ to deduce the desired estimates. But this is generally not allowable as such a function does not belong to $V$. Also one cannot justify the process via Galerkin approximation. Alternative approach that circumvents this difficulty will be reported elsewhere.
\end{remark}

%%%%% S5
\section{Three-dimensional Regularity and Uniqueness}
We consider the regularity and uniqueness of solutions in 3D in the case that the  porosity $b $ is constant  in $\Omega$.
\begin{theorem} \label{thregu-3d}
Assume that the porosity is a constant. Let $\phi_0 \in V$. Problem \eqref{1.1}-\eqref{1.3} subject to the boundary conditions \eqref{IBC_up}, \eqref{IBC}, \eqref{BC}$_1$, \eqref{hBC} together with the periodic conditions in the horizontal directions, and the initial condition \eqref{IC} admits a unique global solution $(\textbf{u},\phi, P)$, such that $\phi \in L^\infty(0,T; V) \cap L^2(0,T; W)$ for any $T>0$.
\end{theorem}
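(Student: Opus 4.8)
The plan is to construct the solution by the Galerkin scheme of Section~3 (which already requires $b$ constant), close the crucial a priori estimate by testing \eqref{1.3} with the principal-operator test function $-\mathcal{L}\phi=\operatorname{div}(bD\nabla\phi)$ — admissible at the Galerkin level precisely because $b$ is constant, so that $-\mathcal{L}\phi_n$ still belongs to $V_n=\mathrm{span}\{w_1,\dots,w_n\}$ — and prove uniqueness by a difference estimate, mirroring the two-dimensional case but with one extra ingredient. The genuinely three-dimensional difficulty, which will be the main obstacle, is that the $H^1$-type estimate carried out naively closes only into a Riccati inequality $\tfrac{d}{dt}\|\nabla\phi\|_{L^2}^2\le C\|\nabla\phi\|_{L^2}^6$ that blows up in finite time; the remedy is to exploit the mildness of Darcy's law, concretely the bound \eqref{u-phi-lp}.

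First I would record a maximum-principle estimate. Multiplying \eqref{1.3} by $|\phi|^{q-2}\phi$ and using $\operatorname{div}\mathbf{u}=0$, the condition $\mathbf{u}\cdot\mathbf{e}_z=0$ on $z=0,-H$, $\phi|_{z=0,-H}=0$, and the interfacial conditions \eqref{IBC_up}, \eqref{IBC} (so that every boundary and interface contribution from the transport and diffusion terms cancels), one obtains $\tfrac{d}{dt}\int_\Omega|\phi|^q\,d\mathbf{x}\le 0$, hence $\|\phi(t)\|_{L^q(\Omega)}\le\|\phi_0\|_{L^q(\Omega)}$ for every $q\in[2,\infty]$ for which the right-hand side is finite. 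Since $\phi_0\in V\hookrightarrow L^6(\Omega)$ in three dimensions, this yields $\phi\in L^\infty(0,T;L^6(\Omega))$, and therefore, by \eqref{u-phi-lp} and \eqref{p-phi} with $p=6$, $\mathbf{u}\in L^\infty(0,T;L^6(\Omega))$ and $P\in L^\infty(0,T;W^{1,6}(\Omega))$. This $L^q$ estimate is classical for advection--diffusion equations with divergence-free drift; it is the one place where more than the energy inequality is used, and at the approximation stage it is justified by the usual truncation/renormalisation argument.

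Next I would run the $H^1$ estimate. Testing \eqref{1.3} with $-\mathcal{L}\phi=\operatorname{div}(bD\nabla\phi)$ and using that $b$ is constant gives
\[
\tfrac{b}{2}\tfrac{d}{dt}\|\phi\|_V^2+\tfrac12\|\mathcal{L}\phi\|_{L^2(\Omega)}^2\le\tfrac12\int_\Omega|\mathbf{u}|^2|\nabla\phi|^2\,d\mathbf{x}.
\]
I would estimate the right-hand side by $\|\mathbf{u}\|_{L^6(\Omega)}^2\|\nabla\phi\|_{L^3(\Omega)}^2$, interpolate $\|\nabla\phi\|_{L^3}^2\le C\|\nabla\phi\|_{L^2}\|\nabla\phi\|_{L^6}$, and invoke Lemma~\ref{remark-imdedding} ($\|\nabla\phi\|_{L^6}\le C\|\phi\|_W$) together with Lemma~\ref{remark-norm-equi} ($\|\phi\|_W\le C\|\mathcal{L}\phi\|_{L^2}$), so the right-hand side is $\le C\|\mathbf{u}\|_{L^6}^2\|\nabla\phi\|_{L^2}\|\mathcal{L}\phi\|_{L^2}\le\tfrac14\|\mathcal{L}\phi\|_{L^2}^2+C\|\mathbf{u}\|_{L^6}^4\|\nabla\phi\|_{L^2}^2$. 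Absorbing the first term and inserting $\|\mathbf{u}\|_{L^6}\le C\|\phi_0\|_{L^6}$ from the previous step, one reaches
\[
\tfrac{d}{dt}\|\phi\|_V^2+c\,\|\mathcal{L}\phi\|_{L^2(\Omega)}^2\le C\|\phi_0\|_{L^6(\Omega)}^4\,\|\phi\|_V^2,
\]
now with a \emph{constant} coefficient; Gronwall gives $\phi\in L^\infty(0,T;V)$ for every $T>0$, and integration in time then gives $\mathcal{L}\phi\in L^2(0,T;L^2(\Omega))$, i.e. $\phi\in L^2(0,T;W)$ by the norm equivalence of Lemma~\ref{remark-norm-equi}. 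All these manipulations are legitimate on the Galerkin system (the $w_k$ are smooth in each layer and their horizontal derivatives satisfy the interface conditions built into $W$) and the bounds pass to the limit; together with the previous step this produces a solution $(\mathbf{u},\phi,P)$ with the asserted regularity. This is where the main obstacle lies: without the uniform $L^6$ bound the coefficient $\|\mathbf{u}\|_{L^6}^4$ would degenerate to $C\|\nabla\phi\|_{L^2}^4$ and the inequality would be a blow-up-prone Riccati one — it is Darcy's law, through \eqref{u-phi-lp}, that makes the argument global.

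Finally, uniqueness. For two solutions $(\mathbf{u},\phi,P_1)$, $(\mathbf{v},\varphi,P_2)$ in this class, set $U=\mathbf{u}-\mathbf{v}$, $\Phi=\phi-\varphi$, $\hat{P}=P_1-P_2$, which solve \eqref{2.24}--\eqref{2.26}. Testing the $\Phi$-equation with $\Phi$, the term $\int_\Omega(\mathbf{v}\cdot\nabla\Phi)\Phi\,d\mathbf{x}$ vanishes by the divergence-free and boundary/interface conditions, leaving
\[
\tfrac{b}{2}\tfrac{d}{dt}\|\Phi\|_{L^2(\Omega)}^2+\int_\Omega bD|\nabla\Phi|^2\,d\mathbf{x}=-\int_\Omega(U\cdot\nabla\phi)\,\Phi\,d\mathbf{x}.
\]
Using $\|U\|_{L^4(\Omega)}\le C\|\Phi\|_{L^4(\Omega)}$ from \eqref{u-phi-lp}, the Gagliardo--Nirenberg inequality $\|\Phi\|_{L^4}\le C\|\Phi\|_{L^2}^{1/4}\|\nabla\Phi\|_{L^2}^{3/4}$ valid in dimension three, and $\phi\in L^\infty(0,T;V)$ from the previous step, the right-hand side is bounded by $\varepsilon_0\|\nabla\Phi\|_{L^2}^2+C\big(\sup_{[0,T]}\|\nabla\phi\|_{L^2}\big)^4\|\Phi\|_{L^2}^2$; absorbing $\varepsilon_0\|\nabla\Phi\|_{L^2}^2$ and applying Gronwall with $\Phi(0)=0$ forces $\Phi\equiv0$, whence $\hat{P}\equiv0$ up to a constant and $U\equiv0$. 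The one technical point to be careful with throughout is the justification of the formal test functions: $-\mathcal{L}\phi$ is unproblematic at the Galerkin level because $b$ is constant, while the $L^q$ bound needs the standard truncation argument.
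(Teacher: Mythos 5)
Your core a priori machinery is the same as the paper's: you test \eqref{1.3} with $-\mathcal{L}\phi$ (legitimate at the Galerkin level because $b$ is constant and $\mathcal{L}\phi_n\in V_n$), you control the advection term through Darcy's bound \eqref{u-phi-lp}, the interpolation $\|\nabla\phi\|_{L^p(\Omega)}\leq C\|\nabla\phi\|_{L^2(\Omega)}^{1/2}\|\mathcal{L}\phi\|_{L^2(\Omega)}^{1/2}$ furnished by Lemmas \ref{remark-norm-equi} and \ref{remark-imdedding}, and a time-uniform $L^q$ bound on $\phi$ coming from the divergence-free transport structure; your uniqueness argument is the same $L^2$ difference estimate (you merely keep $\int(U\cdot\nabla\phi)\Phi$ instead of integrating by parts, and use $\sup_t\|\nabla\phi\|_{L^2}$ instead of $\|\phi_0\|_{L^4}$ in the Gronwall factor). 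The paper works with $q=4$ via the test function $|\phi|^2\phi$, giving \eqref{regu-3d-4}, while you take $q=6$; that difference is immaterial.

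The genuine gap is the ordering and justification of the $L^q$ step. You place the bound $\|\phi(t)\|_{L^q}\leq\|\phi_0\|_{L^q}$ first, claiming it "at the approximation stage by truncation/renormalisation", and then close the $H^1$ estimate globally in one stroke. But neither $|\phi_n|^{q-2}\phi_n$ nor a truncation of $\phi_n$ lies in $V_n$, so this estimate is not available uniformly for the Galerkin approximations; and for the weak solution of Section 3 one cannot yet test with $\mathcal{L}\phi$ (that requires $\phi\in L^2(0,T;W)$, which is exactly what is being proved), while justifying the nonlinear test function at the level $\partial_t\phi\in L^{4/3}(0,T;V^*)$, $\phi\in L^2(0,T;V)$ is itself delicate. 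This circularity is precisely why the paper argues in two stages: it first obtains local regularity from the Riccati-type inequality \eqref{regu-3d-0} (the estimate you dismiss as "naive"), so that on the maximal interval the solution lies in $L^\infty(0,T;V)\cap L^2(0,T;W)$ and the test function $|\phi|^2\phi$ is admissible, yielding \eqref{regu-3d-4}; the improved Gronwall bound \eqref{regu-3d-5} then excludes blow-up at $T^*$ by contradiction. Your argument becomes correct if you restore this order (local regularity first, then the $L^q$ bound on the interval of regularity, then continuation), or follow the maximum-principle route of Remark \ref{remark-max}, which similarly requires a positive time $t_0$ at which $\phi(t_0)\in W\hookrightarrow L^\infty$; note also that your claim for $q=\infty$ is not needed and would require exactly this extra care, since $\phi_0\in V$ need not be bounded in three dimensions.
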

\begin{proof}
\textbf{Local regularity.}
We just perform the formal calculations, and one could resort to the Galerkin approximation for the rigorous proof.
Let $\phi_0 \in V$. We first prove the local regularity, i.e.,  there exists $T^*>0$ such that the solution $\phi \in L^\infty(0,T^*; V) \cap L^2(0,T^*; W)$.

Multiplying the equation \eqref{1.3} by $ -\text{div}(bD \na \phi)$  and integrating over $\Omega$,  it follows that
\begin{align}\label{regu-3d-00}
\begin{split}
 \frac{1}{2} \frac{d}{d t} \int_{\Omega} b^2 D |\na \phi |^2+ \int_{\Omega}  \big|\text{div} (bD \na  \phi )\big|^2
 &\leq \left| \int_{\Omega}\textbf{ u}\cdot \na  \phi   \,  \text{div} ( b D  \na \phi)\right|.   \end{split}
 \end{align}
By \eqref{u-phi-lp} and the standard Sobolev imbedding,
 $ \|\textbf{u}\|_{L^6(\Omega)}  \leq C  \|  \phi\|_{L^6(\Omega)} \leq C  \|\na \phi\|_{L^2(\Omega)}$.  In view of \eqref{remark-imdedding-1}, we have
 \begin{align}
 \|\na  \phi \|_{L^3(\Omega)}
 \leq   \|\na  \phi \|^{1/2}_{L^2(\Omega)}    \|\text{div}( b D  \na \phi)\|^{1/2}_{L^2(\Omega)}.  \notag
\end{align}
By H\"{o}lder's inequality,
\begin{align} \label{regu-3d-0}
\begin{split}
  \frac{1}{2} \frac{d}{d t} \int_{\Omega} b^2 D |\na \phi |^2+ \int_{\Omega}  \big|\text{div} (bD \na  \phi )\big|^2
  &\leq   \|\phi \|_{L^6(\Omega)}  \|\na \phi \|^{1/2}_{L^2(\Omega)}  \|\text{div}  ( b D  \pa_{z} \phi)\|^{3/2}_{L^2(\Omega)}\\
 &\leq  C \|\na \phi\|^6_{L^2(\Omega)} +\va_0 \|\text{div}  ( b D  \pa_{z} \phi)\|^2_{L^2(\Omega)},
 \end{split}
 \end{align}
 which by integration implies that
 \begin{align}
   \int_{\Omega} b D |\na \phi |^2(t)   \leq   \frac{\int_{\Omega} b D |\na \phi_0 |^2   }{1-Ct\int_{\Omega} b D |\na \phi_0 |^2    }   \notag
 \end{align}
 for $0\leq t<  (C\int_{\Omega} b D |\na \phi_0 |^2  )^{-1}.$ Let $T*= (3C\int_{\Omega} b D |\na \phi_0 |^2   )^{-1}$. We have
 \begin{align}\label{regu-3d-2}
   \int_{\Omega} b D |\na \phi(t) |^2 \leq 3 \int_{\Omega} b D |\na \phi_0 |^2 \quad \text{ for any } t\in [0,T*].
 \end{align}
By integrating \eqref{regu-3d-0} over $[0,T^*]$ and using \eqref{regu-3d-2}, we obtain that
\begin{align}
 \int_0^{T^*}\int_{\Omega}  \big|\text{div} (bD \na  \phi )\big|^2   \leq C (T^* + 1)\int_{\Omega} b D |\na \phi_0 |^2  .
\end{align}

\textbf{Global regularity.}
We next prove that $T^*=\infty$ by contradiction. If $T^*<\infty$, then we have $ \limsup_{t\rightarrow T^*} \|\phi(t)\|_V =\infty$. We shall prove that this is  impossible.
 Thanks to the imbedding result proved in section 1,
\begin{align} \|\phi\|_{L^\infty(\Omega)} %\leq C \|\phi\|_{W^{1,4}(\Omega)}
 \leq C \| \phi\|^{1/2}_{H^1(\Omega)} \|\phi\|^{1/2}_{W}, \notag
\end{align}
we have
\begin{align} \|\phi\|_{L^4(0,T; L^\infty(\Omega))} \leq C \|\phi\|^{1/2}_{L^\infty(0,T;H^1(\Omega))}  \|\phi\|^{1/2}_{L^2(0,T;W)}, \notag\end{align}
and therefore $|\phi|^2 \phi  \in  L^2(0,T; V)$ if $ \phi \in L^\infty(0,T; V) \cap  L^2(0,T; W)$.
Multiplying \eqref{1.3} by $|\phi|^2 \phi  $ and integrating over $\Omega$, we obtain that
\begin{align}\label{regu-3d-3}
 \frac{1}{4} \frac{d}{d t} \int_{\Omega}  b | \phi |^4+ 3 \int_{\Omega} bD  | \na  \phi  |^2 |\phi|^2
 \leq 0,
 \end{align}
 which implies that
 \begin{align}\label{regu-3d-4}
  \|\phi(t)\|_{L^4(\Omega)} \leq \|\phi_0\|_{L^4(\Omega)}  \quad \text{ for any  }   0<t<T.
 \end{align}
  By H\"{o}lder's inequality, we deduce from \eqref{regu-3d-00} that
\begin{align}  \label{regu-3d-5}
\begin{split}
  &\frac{1}{2} \frac{d}{d t} \int_{\Omega} b^2 D |\na \phi |^2+ \int_{\Omega}  \big|\text{div} (bD \na  \phi )\big|^2\\
  &\leq    \|\textbf{u }\|_{L^4(\Omega)}  \|\na \phi \|_{L^4(\Omega)}  \|\text{div}  ( b D  \pa_{z} \phi)\|_{L^2(\Omega)}\\
  &\leq   C \|\phi\|_{L^4(\Omega)}  \|\na \phi \|^{1/2}_{L^2(\Omega)}  \|\text{div}  ( b D  \nabla \phi)\|^{3/2}_{L^2(\Omega)}\\
 &\leq  C \|\phi\|^8_{L^4(\Omega)}  \|\na \phi \|^2_{L^2(\Omega)} +\va_0 \|\text{div}  ( b D  \nabla \phi)\|^2_{L^2(\Omega)},
 \end{split}
 \end{align}
 where we have used \eqref{u-phi-lp} and the interpolation
  \begin{align}
 \|\na  \phi \|_{L^4(\Omega)}
 \leq   \|\na  \phi \|^{1/2}_{L^2(\Omega)}    \|\text{div}( b D  \na \phi)\|^{1/2}_{L^2(\Omega)} \notag
\end{align} in the second step.
This by Gronwall's inequality and \eqref{regu-3d-4}  implies that
\begin{align}
    \|\na \phi (t)\|^2_{L^2(\Omega)}  \leq  \|\na \phi_0 \|^2_{L^2(\Omega)}  \exp\{ C \|\phi_0\|^8_{L^4(\Omega)} T   \}  \quad \text{ for any  }   0<t<T. \notag
 \end{align}
  It follows that $ \limsup_{t\rightarrow T^*} \|\phi(t)\|_V  \leq  \|\na \phi_0 \|^2_{L^2(\Omega)}  \exp\{ C \|\phi_0\|^8_{L^4(\Omega)} T^*   \}$,  which is a contradiction. We therefore obtain that $T^*=\infty.$

\textbf{Uniqueness.}  We finally prove the uniqueness of the regular solution.   Let $(\mathbf{u}, \phi, P_1)$, $ (\mathbf{v}, \varphi, P_2)$ be two regular solutions to problem \eqref{1.1}-\eqref{1.3}  subject to the boundary conditions \eqref{IBC_up}, \eqref{IBC}, \eqref{BC}$_1$, \eqref{hBC} together with the periodic conditions in the horizontal directions, and the initial condition \eqref{IC}. Set $U=\mathbf{u}-\mathbf{v}, \Phi=\phi-\varphi$ and $\hat{P}=P_1-P_2$. We have
\begin{align}
&\operatorname{div}(U) =0, \label{2.24-3d} \\
&U=-\frac{K}{\mu} (\nabla \hat{P}  +\rho_0 \alpha \Phi g\mathbf{e}_z  ) , \label{2.25-3d} \\
&b \frac{\partial \Phi}{\partial t}+U \cdot \nabla \phi+\mathbf{v} \cdot\nabla \Phi  -\operatorname{div}(b D \nabla \Phi)=0, \label{2.26-3d}
\end{align}
where $(U, \Phi, \hat{P})$ satisfies  \eqref{IBC_up}, \eqref{IBC}, \eqref{BC}$_1$, \eqref{hBC} together with the periodic conditions in the horizontal directions, and  $ \Phi(0)=0.$

Multiplying \eqref{2.26} with $\Phi$ and integrating over $\Omega$, it yields
 \begin{equation}\label{uni1-3d}
\begin{split}
\frac{d}{d t} \int_{\Omega} b|\Phi|^2+\int_{\Omega} b D|\nabla \Phi|^2 &\leq C \left|\int_{\Omega} U \cdot \nabla \Phi \phi \right| \\
& \leq C\|U\|_{L^4(\Omega)}\|\phi\|_{L^{4}(\Omega)}\|\nabla \Phi\|_{L^2(\Omega)}.
\end{split}
\end{equation}
Note that
\begin{equation}
\|U(t)\|_{L^4(\Omega)} \leq C\|\Phi(t)\|_{L^4(\Omega)}, \quad \forall t \ge 0.  \notag
\end{equation}
By H\"{o}lder's inequality and interpolation,
\begin{equation}\label{uni2-3d}
\begin{split}
\left|\int_{\Omega} U \cdot \nabla \Phi \phi \right|
& \leq C\|U\|_{L^4(\Omega)}\|\phi\|_{L^{4}(\Omega)}\|\nabla \Phi\|_{L^2(\Omega)}\\
&\leq C \|\Phi\|_{L^4(\Omega)}\|\nabla \Phi\|_{L^2(\Omega)}\|\phi\|_{L^{4}(\Omega)}\\
& \leq \|\nabla \Phi\|^{\f{7}{4}}_{L^2(\Omega)}\|\Phi|^{\f{1}{4}}_{L^2(\Omega)}\|\phi\|_{L^4(\Omega)} \\
& \leq C\|\Phi\|_{L^2(\Omega)}^2\|\phi\|_{L^4(\Omega)}^8 +\varepsilon_0\|\nabla \Phi\|^2_{L^2(\Omega)}.
\end{split}
\end{equation}
Here  $\varepsilon_0$ is set as $1/2 \underset{1 \leq j \leq \ell}{\min}\{b_j D_j\}$.  
Inserting  \eqref{uni2-3d} into \eqref{uni1-3d} and using the Gronwall inequality, we obtain that
\begin{equation}
  \|\Phi(t)\|^2_{L^2(\Omega)} \le   \|\Phi(0)\|^2_{L^2(\Omega)} \exp \left\{ C  T   \|\phi_0\|^8_{L^4(\Omega)} \right\},
\end{equation}
which implies that $\Phi(t)=0 $ for any $t>0$ since $\Phi(0)=0 $. In view of  \eqref{2.24-3d} and \eqref{2.25-3d}, we derive the uniqueness of $P$ (up to a constant) and $\textbf{u}$.
The uniqueness is thus proved.
\end{proof}

\begin{remark}\label{remark-max}
The global existence of regular solutions (global regularity) could also be proved by the maximum principle \cite{temam1988ams, ly1999jns, oliver2000gevrey} utilizing the imbedding that we proved in Lemma 2.3.
 \end{remark}
 
\ignore{

\emph{Let $\phi_0 \in V $, and $\phi(x,t)$ be a solution of \eqref{1.1}-\eqref{1.3} subject to the boundary conditions \eqref{IBC_up}, \eqref{IBC}, \eqref{BC}$_1$, \eqref{hBC} together with the periodic conditions in the horizontal direction, and the initial condition \eqref{IC} in $\Omega\times [0,T]$ such that $\phi \in L^\infty(0,T; V) \cap L^2(0,T; W)$. Let
$t_0\in [0,T]$ be such that $\|\phi(t_0)\|_{L^\infty} \leq M$. Then we have $\|\phi(t)\|_{L^\infty}  \leq M$ for any $t\in [t_0, T].$ And if the solution exists globally, then
 \begin{align} \label{remark-max-1}
 \limsup_{t\rightarrow \infty}\|\phi(t)\|_{L^\infty} \leq M.
 \end{align}}

Assume the maximum principle holds for a moment. To prove the global existence we assume in contradiction that the maximal interval of existence of the regular solutions is $[0,T^*]$ and  $T^*<\infty$. Therefore,  $ \limsup_{t\rightarrow T^*} \|\phi(t)\|_V =\infty$.  Since
$\phi \in L^\infty(0,T; V) \cap L^2(0,T; W)$ for any $0<T<T^*$,  $\phi(t) \in W$ for almost every $t\in (0,T^*)$. By the Sobolev imbedding, there exists a $t_0 \in (0,T^*)$ such that $\|\phi(t_0)\|_{L^\infty(\Omega))}\leq M.$
The maximum principle implies that $\|\phi(t)\|_{L^\infty}  \leq M$ for any $t\in [t_0, T]$ and any $T<T^*.$  Henceforth, we deduce form \eqref{regu-3d-5} that
\begin{align}\notag
\begin{split}
  \frac{1}{2} \frac{d}{d t} \int_{\Omega} b^2 D |\na \phi |^2+ \int_{\Omega}  \big|\text{div} (bD \na  \phi )\big|^2
    \leq  C \|\phi\|^8_{L^4(\Omega)}  \|\na \phi \|^2_{L^2(\Omega)},
 \end{split}
 \end{align}
 Gronwall's inequality gives that
 \begin{align}
    \|\na \phi (t)\|^2_{L^2(\Omega)}  \leq  \|\na \phi (t_0) \|^2_{L^2(\Omega)}  \exp\left\{ C \|\phi (t_0)\|^8_{L^\infty(\Omega)} (T^*-t_0)  \right \}       \notag
 \end{align}
  for any  $ t_0<t\leq T<T^*.$  It follows that $ \limsup_{t\rightarrow T^*} \|\phi(t)\|_V  < \infty $, which implies that $T^*=\infty.$

 Now let us prove the maximum principle. The proof is similar to that in \cite{ly1999jns}.  We therefore sketch it without further details.  By multiplying \eqref{1.3} by $(\phi-M)+$, the positive part of $\phi-M$, we deduce that
 \begin{align}\notag
\begin{split}
 \frac{1}{2} \frac{d}{d t} \int_{\Omega} b  |  (\phi-M)_+ |^2+ \int_{\Omega} bD  | \na  (\phi-M)_+|^2
=  \int_{\Omega}\textbf{u}\cdot \na  (\phi-M)_+   (\phi-M)_+ =0,  \end{split}
 \end{align}
 which implies that $\|(\phi(t)-M)_+ \|^2_{L^2(\Omega)}\leq \|(\phi(t_0)-M)_+ \|^2_{L^2(\Omega)}$. Since $\|\phi(t_0)\|_{L^\infty} \leq M $,  we have $\|(\phi(t_0)-M)_+ \|_{L^2(\Omega)}=0 $, and therefore $  \phi(t) \leq M$ for any $t_0\leq t\leq T$. Likewise, by multiplying the equation with $(\phi+M)_-$, the negative part of $\phi+M $, we deduce that $  \phi(t) \geq - M$ for any $t_0\leq t\leq T$. This implies that $\|\phi(t)\|_{L^\infty}  \leq M$ for any $t\in [t_0, T]$ and therefore \eqref{remark-max-1}.

 % To the aim, we use the following claim: Let $\phi_0 \in V $, and $\phi(x,t)$ be a solution of \eqref{1.1}-\eqref{1.3} subject to the boundary conditions \eqref{IBC_up}, \eqref{IBC}, \eqref{BC}$_1$, \eqref{hBC} together with the periodic conditions in the horizontal direction, and the initial condition \eqref{IC} in $\Omega\times [0,T]$ such that $\phi \in L^\infty(0,T; V) \cap L^2(0,T; W)$. Let
%$t_0\in [0,T]$ be such that $|\phi(x,t_0)| $

%Strategy
%\begin{itemize}
%\item Local in time regularity of solutions
%\item Imbedding of the regularity space into $L^\infty$ or $L^6$
%\item $L^\infty$ or $L^6$  estimates of the solution
%\item Global in time regularity with the help of the $L^\infty$ or $L^6$ estimate.
%%First work on $x$, then on $z$.
%\item Uniquess via $L^q, q>2$ estimates. Together with $V\subset L^6$.
%\end{itemize}
}

\section{Conclusion and Remarks}

We have derived the well-posedness of convection in layered porous media model under appropriate assumptions. This is the first mathematical result of its kind on this physically important model. Unlike the classical homogenous material case, the solution in no longer smooth, but only piecewise smooth.

There are a few important problems remain to be addressed. First, the physically important case of piecewise constant porosity need to be studied. Second, the jump discontinuity of the material properties should be an idealization of rip change of these properties over a small interval. This resembles the relationship between sharp and diffuse interface limit as the material properties converge from continuous function with rapid transition region to piecewise constant functions. It is thus of interest to investigate the 'sharp interface limit' of these models.
Third,  the rate of transport of mass in the vertical direction, an analogy of the Nusselt number, is of importance. in applications.  We are particularly interested in the impact of the layered structure (the disparity in the material parameters). 
These and other physically important problems are the subject of subsequent works.

%{\color{red} Wang will finish this.}

%
%\section{Acknowledgements}

%{\color{red}Update the ref}

\iffalse
\fi

%\section*{References}
\bibliographystyle{amsalpha}
\bibliography{layer}

\end{document}